\newtheorem{theorem}{Theorem}[section]
\newtheorem{lemma}[theorem]{Lemma}
\newtheorem{proposition}[theorem]{Proposition}
\newtheorem{corollary}[theorem]{Corollary}
\newtheorem{example}[theorem]{Example}
\newtheorem{definition}[theorem]{Definition}
\newcommand{\MCG}{\text{MCG}}
\definecolor{nicered}{RGB}{204,0,0}
\definecolor{niceblue}{RGB}{51,190,255}
\newcommand{\address}[1]{\gdef\@address{#1}}
\newcommand{\email}[1]{\gdef\@email{\url{#1}}}
\newcommand{\@endstuff}{\par\vspace{\baselineskip}\noindent\small
\begin{tabular}{@{}l}\scshape\@address\\\textit{E-mail address:} \@email\end{tabular}}
\title{Higher degree covering moves for 3-manifolds}
\author{Aru Mukherjea}
\address{Department of Mathematics, University of Texas, Austin, TX}
\email{amukherj@utexas.edu}
\date{}
\begin{document}

\maketitle
\begin{abstract}

Covering moves relate colored link diagrams appearing as the branch sets of simple branched coverings of $S^3$ by the same 3-manifold. We provide a complete set of covering moves on plat closures of braids in each fixed degree $d \geq 4$, extending prior work of Apostolakis and Piergallini. As a consequence we show that after stabilization to the same degree at least 4, only two local tangle replacements are required to relate any two colored links, recovering Bobtcheva and Piergallini's resolution of a conjecture of Montesinos. We also obtain that in the braided setting, the two local tangle replacements suffice after $d-2$ stabilizations. Lastly, we prove that the $d$-fold simple branched cover of a $d$-bridge knot is a lens space $L(p,q)$ and provide a method for determining $p$ and $q$.
\end{abstract}

\section{Introduction}

Every closed, oriented 3-manifold is a degree 3 irregular branched cover of $S^3$ with branch set a link, proven independently by Hilden \cite{hil74}, Hirsch \cite{hir74}, and Montesinos \cite{mon74}. Thus, any such 3-manifold can be fully described by a colored link in $S^3$, which is a link together with a representation of the link group into the symmetric group $S_3$. This representation, known as the \textit{monodromy} or \textit{coloring}, specifies the covering information. For a $d$-fold cover, the representation is into $S_d$.

This description is not unique: many colored links describe the same 3-manifold. In fact, for each closed, oriented 3-manifold $M$ and degree $d$, there are infinitely many colored knots that appear as the branch set of a $d$-fold branched cover $M \to S^3$ (see Figure \ref{fig:twist}).

In order to relate the different colored links describing the same manifold, we have the notion of a \textit{covering move}, or an operation performed on a colored link that preserves the covering manifold up to diffeomorphism. Two examples of covering moves are the local tangle replacements C and N shown in Figure \ref{fig:cn}. For example, the knots in Figure \ref{fig:twist1} differ by repeated applications of move C. The moves C and N were introduced in \cite{mon85a}, where it is proven they are covering moves. If the degree of the cover is allowed to vary, another pair of examples of covering moves appear: \textit{stabilization}, or increasing the degree of the cover by adding to the branch set an unlinked unknot colored $(i\ d\text{+}1)$ for some $i \leq d$, and \textit{destabilization}, or removing such a component.

In \cite{mon85b}, Montesinos conjectured that C and N are a minimal set of covering moves. This conjecture was answered in the affirmative for degree 3 by Piergallini, who constructed a set of moves for colored plats in \cite{pie91}, and showed that they could be carried out using only C, N, and link isotopy after one stabilization to degree 4 \cite{pie95}. It was also shown to be true in degree 4 (after one stabilization) by Apostolakis \cite{apo03}, and in arbitrary degree (after stabilization to degree at least 4) by Bobtcheva and Piergallini \cite{bp07, bp11}. The latter result was independent of previous techniques and proved a correspondence between 4-dimensional 2-handlebodies (up to 2-equivalence) and colored ribbon surfaces in $B^4$ appearing as their branch sets (up to a set of covering moves), providing a link between Kirby calculus in the cover and combinatorial moves on surfaces in the target. By restricting to the boundary, they obtain the result for links in $S^3$.

We extend the original technique used by Piergallini and Apostolakis to arbitrary degree, with key ingredient the generators for the liftable subgroup provided by Wajnryb and Wi\'{s}niowska-Wajnryb for degree 4 in \cite{ww08} and higher degrees in \cite{ww12}.

\begin{theorem} \label{thm:moves}
Suppose two colored plats $L_1, L_2 \subset S^3$ are such that the degree $d \geq 3$ simple branched covering of $S^3$ over each $L_i$ is the same closed 3-manifold. Then, $L_1$ and $L_2$ can be related by colored isotopy and a finite sequence of moves N, C, II$_i$, III$_{i,j}$, and IV, which preserve the plat form of the link.
\end{theorem}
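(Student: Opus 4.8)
The plan is to lift the problem to a four-dimensional branched-cover cobordism and then read off each required move from a Morse decomposition of that cobordism. Write $p_i\colon M\to S^3$ for the degree-$d$ simple branched coverings with colored branch sets $L_i$ encoded by $\omega_i$. The geometric input is the existence of a simple branched covering $q\colon W^4\to S^3\times[0,1]$, with branch locus a properly embedded surface $F$, restricting over $S^3\times\{i\}$ to the covering $p_i$ of the colored link $L_i$; here $W$ is a smooth cobordism from $M$ to $M$. Such a cobordism exists after possibly replacing the $p_i$ by stabilized degree-$d$ coverings, by combining the fact that every degree-$d$ simple branched covering of $S^3$ bounds one of $B^4$ over a ribbon surface with the uniqueness up to stabilization of such ribbon fillings; a stabilization of a covering is realized on its colored diagram by an application of move $\mathrm N$. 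For $d=3,4$ this is the framework of Piergallini \cite{pie91} and Apostolakis \cite{apo03}.

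Next, put $F$ in generic position with respect to the height function $h\colon S^3\times[0,1]\to[0,1]$ so that $h\circ q$ is Morse on $W$ with critical points confined to standard neighbourhoods of the critical points of $h|_F$. On each slab between consecutive critical levels the restricted covering is a product, so the colored diagram in $S^3$ evolves only by an ambient isotopy together with changes of Wirtinger labels, i.e.\ by colored Reidemeister moves and the conjugation move $\mathrm C$. It therefore suffices to match, against the moves in the statement, the change in the colored diagram across a single critical level of $h|_F$. A local minimum or maximum produces the birth or death of a small branch circle, realized by $\mathrm N$ (followed, if necessary, by $\mathrm C$ to normalize its transposition). A saddle attaches a band to the colored branch link, and the requirement that the $S_d$-coloring extend over the band confines the local picture, after colored Reidemeister moves and conjugations, to a short list of tangle replacements; these are exactly the moves $\mathrm{II}_i$, $\mathrm{III}_{i,j}$, and $\mathrm{IV}$, indexed by how the supports of the adjacent transpositions carried by the two banded strands meet in $\{1,\dots,d\}$, with $\mathrm{IV}$ the genuinely new possibility available only when $d\ge4$.

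The heart of the argument, and the step I expect to be the main obstacle, is the proof that this finite list is \emph{complete}: that every saddle, for every $d$ and every admissible local $S_d$-coloring, normalizes after colored Reidemeister moves and conjugations to one of the listed replacements. This is a normal-form computation for the local monodromy: conjugating by $\mathrm C$ one reduces to the case in which the two banded strands carry adjacent transpositions $(i\;i{+}1)$ and $(j\;j{+}1)$, and then a finite case analysis — according to whether their supports are disjoint, share exactly one index, or coincide — produces precisely the three families $\mathrm{II}_i$, $\mathrm{III}_{i,j}$, and $\mathrm{IV}$. The converse direction, that each listed replacement is induced by an honest branched-cover cobordism and hence preserves $M$, is the routine part. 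A technical point logically prior to the Morse argument is arranging $F$ in ribbon position so that $W$ carries no superfluous handles, which is where the stabilizations recorded by $\mathrm N$ are used; tracking this cost is also what yields the refined two-move statement up to $d-2$ stabilizations.
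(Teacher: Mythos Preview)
Your proposal has genuine gaps, and in fact rests on a misreading of what the moves $\mathrm{II}_i$, $\mathrm{III}_{i,j}$, and $\mathrm{IV}$ are.

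First, these moves are \emph{not} local tangle replacements arising from a saddle. The paper is explicit that only $\mathrm C$ and $\mathrm N$ are local; the remaining moves are global moves on a link in plat form, each replacing a trivial horizontal slice of the plat by a specific braid word (for instance $\mathrm{II}_i$ inserts the single crossing $x_{2i}$, while $\mathrm{IV}$ inserts a long word in many strands). They are not indexed by ``how the supports of the adjacent transpositions carried by the two banded strands meet''; rather, they are chosen so that the braids they insert normally generate $\ker L$, the kernel of the lifting homomorphism $L\colon L^p(B_n)\to \MCG(\Sigma_g)$. Your proposed case analysis---disjoint, one shared index, equal---does not produce these moves, and a single band attachment cannot account for a move like $\mathrm{IV}$ that involves a large fixed pattern on $2d-3$ specified strands. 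So the ``heart of the argument'' you identify is not a normal-form computation that could succeed: the target list is simply not of the shape your Morse picture would output.

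Second, two subsidiary claims are incorrect. Move $\mathrm N$ does not realize a stabilization of the covering: stabilization adds a trivial sheet and an unknot colored $(i\ d{+}1)$, changing the degree, whereas $\mathrm N$ is a local crossing change between strands with disjoint transpositions at fixed degree. And the existence of the branched-cover cobordism $q\colon W^4\to S^3\times[0,1]$ restricting to $p_1,p_2$ \emph{at the same degree $d$} is precisely the kind of statement that is not available without stabilization; the Bobtcheva--Piergallini framework you are implicitly invoking requires it. Since the whole point of Theorem~\ref{thm:moves} is that it holds at fixed degree without stabilizing, you cannot buy the cobordism this way.

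The paper's actual route is entirely different and algebraic: put both links in standardly colored plat form, stabilize the induced Heegaard splittings to common genus, use the Wajnryb--Wi\'sniowska-Wajnryb generating set for the liftable subgroup $L^p(B_n)$ to compute $L$ explicitly, adjust the gluing maps via braids realizing Suzuki's handlebody generators, and then show that the listed moves suffice to insert each normal generator of $\ker L$ into the trivial braid. The moves $\mathrm{II}_i$, $\mathrm{III}_{i,j}$, $\mathrm{IV}$ are designed exactly for this last step.
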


\bigskip
\begin{figure}
\centering

\begin{subfigure}[h]{0.5\textwidth}
\labellist
\hair 2pt
\large\pinlabel C at 62 60
\pinlabel N at 290 60
\small\pinlabel $(i\ j)$ at -5 100
\pinlabel $(j\ k)$ at 32 100
\pinlabel $(i\ j)$ at -5 -10
\pinlabel $(j\ k)$ at 32 -10
\pinlabel $(i\ j)$ at 95 100
\pinlabel $(j\ k)$ at 132 100
\pinlabel $(i\ j)$ at 95 -10
\pinlabel $(j\ k)$ at 132 -10
\pinlabel $(i\ k)$ at 150 30

\pinlabel $(i\ j)$ at 220 100
\pinlabel $(k\ l)$ at 257 100
\pinlabel $(i\ j)$ at 220 -10
\pinlabel $(k\ l)$ at 257 -10
\pinlabel $(i\ j)$ at 320 100
\pinlabel $(k\ l)$ at 357 100
\pinlabel $(i\ j)$ at 320 -10
\pinlabel $(k\ l)$ at 357 -10
\endlabellist
\includegraphics[width=\textwidth]{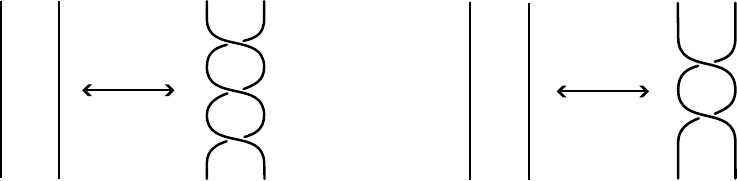}
\medskip
\caption{Moves C and N.}
\label{fig:cn}
\end{subfigure}

\bigskip

\begin{subfigure}[h]{0.7\textwidth}
\labellist
\large
\pinlabel II$_i$ at 185 28
\pinlabel II$_0$ at 185 115
\small
\pinlabel $0$ at 7 -10
\pinlabel $\cdots$ at 21 -10
\pinlabel $2i$ at 33 -10
\pinlabel $2i\text{+}1$ at 52 -10.5
\pinlabel $\cdots$ at 85 -10
\pinlabel $n\text{-}2$ at 118 -10
\pinlabel $n\text{-}1$ at 137 -10

\pinlabel $0$ at 233 -10
\pinlabel $\cdots$ at 247 -10
\pinlabel $2i$ at 261 -10
\pinlabel $2i\text{+}1$ at 280 -10.5
\pinlabel $\cdots$ at 311 -10
\pinlabel $n\text{-}2$ at 344 -10
\pinlabel $n\text{-}1$ at 363 -10
\pinlabel $0$ at 7 75
\pinlabel $1$ at 21 75
\pinlabel $\cdots$ at 69 75
\pinlabel $n\text{-}2$ at 118 75
\pinlabel $n\text{-}1$ at 137 75

\pinlabel $0$ at 233 75
\pinlabel $1$ at 247 75
\pinlabel $\cdots$ at 295 75
\pinlabel $n\text{-}2$ at 344 75
\pinlabel $n\text{-}1$ at 363 75
\endlabellist

\includegraphics[width=\textwidth]{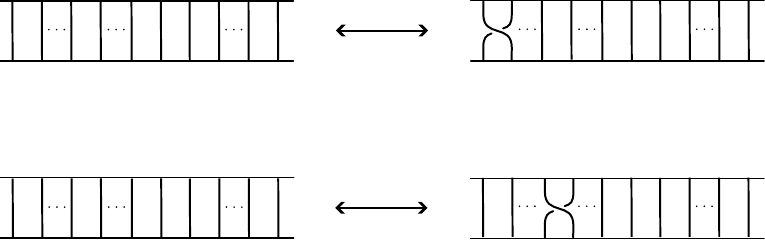}
\medskip
\caption{Moves II$_i$. $i$ runs from $0$ to $d-3$. All strands are standardly colored.}
\end{subfigure}

\bigskip\bigskip

\begin{subfigure}[h]{0.7\textwidth}
\labellist
\large\pinlabel III$_{i,j}$ at 193 182
\tiny\pinlabel $\cdots$ at 18 140
\pinlabel $\cdots$ at 245 -10
\small\pinlabel $0$ at 6 140.5
\pinlabel $i$ at 27 140.5
\pinlabel $i\text{+}1$ at 42 140
\pinlabel $\cdots$ at 70 140
\pinlabel $j$ at 93 140
\pinlabel $j\text{+}1$ at 109 140
\pinlabel $\cdots$ at 132 140
\pinlabel $n\text{-}1$ at 152 140

\pinlabel $0$ at 234 -10
\pinlabel $i$ at 254 -9.5
\pinlabel $i\text{+}1$ at 270 -10
\pinlabel $\cdots$ at 296 -10
\pinlabel $j$ at 320 -10
\pinlabel $j\text{+}1$ at 336 -10
\pinlabel $\cdots$ at 360 -10
\pinlabel $n\text{-}1$ at 380 -10
\endlabellist
\includegraphics[width=\textwidth]{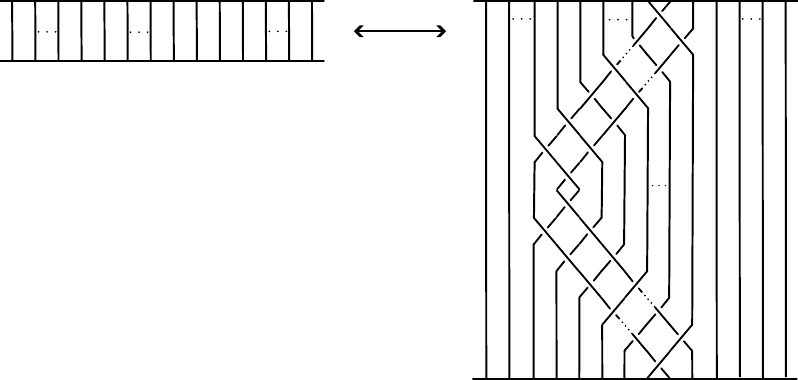}
\medskip
\caption{Moves III$_{i,j}$. $i,j$ are even and $j \leq 2d-4$. All strands are standardly colored.}
\bigskip\bigskip
\end{subfigure}

\begin{subfigure}[h]{0.7\textwidth}
\labellist
\large\pinlabel IV at 183 170
\footnotesize
\pinlabel $(d\text{-}1\thinspace d)$ at 0 176
\pinlabel $(d\text{-}1\thinspace d)$ at 14 187
\pinlabel $(23)$ at 28 176
\pinlabel $(23)$ at 40 187
\pinlabel $(13)$ at 52 176
\pinlabel $(13)$ at 63 187
\pinlabel $(23)$ at 75 176
\pinlabel $(23)$ at 87 187
\pinlabel $(12)$ at 98 176
\pinlabel $(12)$ at 110 187
\pinlabel $(12)$ at 122 176
\pinlabel $(12)$ at 135 187

\pinlabel $(d\text{-}1\thinspace d)$ at 228 176
\pinlabel $(d\text{-}1\thinspace d)$ at 242 187
\pinlabel $(23)$ at 256 176
\pinlabel $(23)$ at 268 187
\pinlabel $(13)$ at 280 176
\pinlabel $(13)$ at 291 187
\pinlabel $(23)$ at 302 176
\pinlabel $(23)$ at 313 187
\pinlabel $(12)$ at 324 176
\pinlabel $(12)$ at 336 187
\pinlabel $(12)$ at 348 176
\pinlabel $(12)$ at 361 187
\small
\pinlabel \textcolor{gray}{$\mid$} at 17 177
\pinlabel \textcolor{gray}{$\mid$} at 40 177
\pinlabel \textcolor{gray}{$\mid$} at 63 177
\pinlabel \textcolor{gray}{$\mid$} at 86 177
\pinlabel \textcolor{gray}{$\mid$} at 109 177
\pinlabel \textcolor{gray}{$\mid$} at 134 177

\pinlabel \textcolor{gray}{$\mid$} at 244 177
\pinlabel \textcolor{gray}{$\mid$} at 268 177
\pinlabel \textcolor{gray}{$\mid$} at 291 177
\pinlabel \textcolor{gray}{$\mid$} at 313 177
\pinlabel \textcolor{gray}{$\mid$} at 335 177
\pinlabel \textcolor{gray}{$\mid$} at 361 177

\pinlabel $0$ at 5 130
\pinlabel $2d\text{-}10$ at 33 130.5
\pinlabel $2d\text{-}4$ at 96 130.5
\pinlabel $n\text{-}1$ at 136 130
\pinlabel $0$ at 231 -10
\pinlabel $2d\text{-}10$ at 259 -9.5
\pinlabel $2d\text{-}4$ at 322 -9.5
\pinlabel $n\text{-}1$ at 362 -10
\tiny\pinlabel $\cdots$ at 15 130
\pinlabel $\cdots$ at 116 130
\pinlabel $\cdots$ at 241 -10
\pinlabel $\cdots$ at 342 -10

\endlabellist
\includegraphics[width=\textwidth]{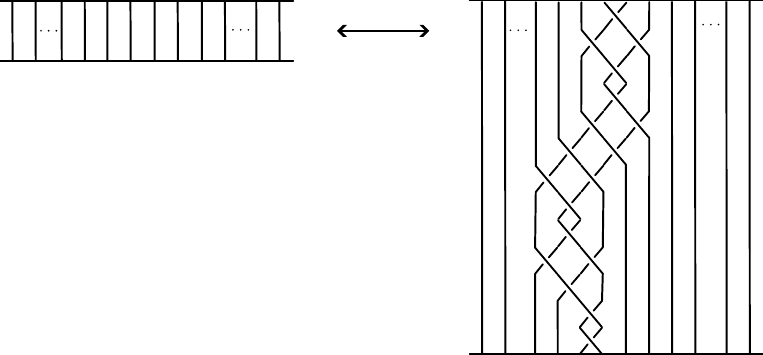}
\medskip
\caption{Move IV.}
\label{fig:iv}
\end{subfigure}

\caption{The moves in Theorem \ref{thm:moves}. The labelling conventions used in these diagrams are described in Section \ref{bg:conventions}. The standard coloring is described in Figure \ref{fig:std}.}
\label{fig:moves}
\end{figure}

\begin{figure}
\centering 

\begin{subfigure}[h]{0.36\textwidth}
\labellist\small
\pinlabel $6n$ at 34 56
\pinlabel $\beta$ at 75 15
\pinlabel $(2\ 3)$ at 14 80
\pinlabel $(1\ 2)$ at 52 80
\pinlabel $\cdots$ at 90 80
\pinlabel $(1\ 2)$ at 142 80
\endlabellist
\includegraphics[width=\textwidth]{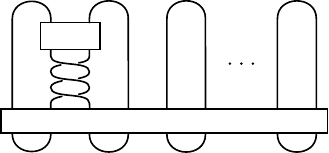}
\caption{}
\label{fig:twist1}
\end{subfigure} \qquad\qquad
\begin{subfigure}[h]{0.53\textwidth}
\labellist\small
\pinlabel $6n_{d-2}$ at 29 55.5
\pinlabel $6n_2$ at 79 56
\pinlabel $6n_1$ at 117 56
\pinlabel $\beta$ at 150 15
\pinlabel $(d\text{-}1\ d)$ at 7 80
\pinlabel $\cdots$ at 55 80
\pinlabel $(2\ 3)$ at 97 80
\pinlabel $(1\ 2)$ at 134 80
\pinlabel $\cdots$ at 166 80
\pinlabel $(1\ 2)$ at 212 80
\endlabellist
\includegraphics[width=\textwidth]{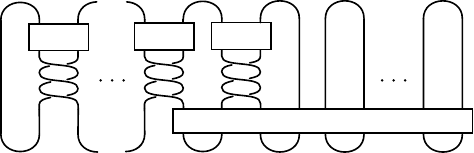}
\caption{}
\end{subfigure}

\caption{Examples of colored knots with the same cover.\smallskip \\ 
(a) Given a 3-manifold $M$, take a colored knot that describes $M$ as a 3-fold simple branched cover of $S^3$. Following \cite{hil74}, this can be written as shown in the figure, for some braid $\beta$. By varying the number of half-twists $n$ in the depicted region, we obtain a family of knots, infinitely many of which are distinct by \cite{kms92}.
\smallskip \\ 
(b) This family of knots can be obtained by stabilizing the construction in Figure \ref{fig:twist1} and joining components using move C. Twist regions about adjacent strands can be added using move C. Twist regions about non-adjacent strands (not pictured) can be added using move N. We obtain infinite families of distinct knots with the same cover, in each degree. For degree at least 4, the twist regions can be chosen so that the family contains knots of arbitrarily high genus \cite{bm19}. As degree of a connected covering is a lower bound for bridge number, successive stabilizations (and joining components using move C) provide a family of knots of increasing bridge number that have the same cover.}
\label{fig:twist}
\end{figure}

For $d=3$, the moves of Theorem \ref{thm:moves} coincide with \cite{pie91}. Other than C and N, the moves require a particular plat form presentation of the entire link.

As a consequence, we obtain a completely 3-dimensional proof of Montesinos' conjecture, recovering the earlier proof in \cite{bp07}.

\begin{theorem}\label{thm:cn}
Suppose two colored links $L_1, L_2 \subset S^3$ are such that the simple branched covering of $S^3$ over each $L_i$ is the same closed 3-manifold. Then, after stabilization to the same degree at least 4, $L_1$ and $L_2$ can be related by colored isotopy and a finite sequence of moves N and C.
\end{theorem}

The approach in Theorem \ref{thm:moves} of using plat closures is natural to the setting of branched covers, as the extra structure induces a Heegaard splitting of the covering 3-manifold. The moves preserve bridge position of the link, so are also applicable to covers of trivial tangles in $B^3$. In particular, up to move C, moves II-IV are positive braids.
As each move is up to braid rather than link isotopy, we may apply them fiberwise to diagrammatically address lifting questions in settings other than links in $S^3$. One example is the setting of Lefschetz fibered 4-manifolds (see \cite{lp01}, \cite{zud07}, \cite{hug22}, \cite{apz11}, \cite{pz18}). The results of \cite{ww12} stated in Proposition \ref{prop:l-gens} address branched coverings of $D^2$ with total monodromy other than the identity, which could be used to study the analogous problem of covering moves for manifolds with boundary. 

The reverse question may also be asked: given a class of links, what manifolds appear as their irregular covers? An upper bound for the Heegaard genus of the covering manifold can be computed from the degree of the cover and the bridge number of the link. For low Heegaard genus, as a corollary of the above theorem, we obtain diagrammatic proofs of the following, extending the result of Uchida \cite{uch05} in degree 3.

\begin{corollary}\label{cor:lens} Let $M$ be the $d$-fold simple branched cover over the colored $b$-bridge link $L$ in $S^3$. Then
\begin{itemize}
\item[(a)] If $b=d$, $M$ is a lens space $L(p,q)$, and $p$ and $q$ can be determined diagrammatically.
\item[(b)] If $b=d+1$, $M$ can be obtained as the double branched cover over a 3-bridge link.
\end{itemize}
\end{corollary}

Note that for the cover to be connected, $b$ must be at least $d-1$. The latter result also follows from the fact that the every element of the mapping class group of a genus 2 surface commutes with the hyperelliptic involution. It would be interesting to have a more specific answer to this question for Heegaard genus 2 or greater.

Results about 3-dimensional covering moves also have 4-dimensional interest. The moves C and N are particularly desirable as they can be carried out as cobordisms between colored tangles $(B^3, T) \subset (S^3, L)$ that lift to product cobordisms $B^3 \times I$. Further motivation comes from \cite{mon85b}, where it is shown that every 4-dimensional 2-handlebody, or 4-manifold built using only 0, 1, and 2-handles, is a 3-fold branched cover of $B^4$ over a ribbon surface.
Montesinos' conjecture then implies that branched covers by 4-manifolds could be glued together along their boundaries, with a possibly singular branch set. \cite{pie95} thus shows that every closed oriented 4-manifold is a degree 4 branched cover of $S^4$ with immersed branch set. Iori and Piergallini \cite{ip02} improve the branch set to be embedded, at the cost of stabilizing to degree 5. It remains open whether the branch set can always be made embedded for degree 4.

It is then useful to keep track of which cobordisms can be achieved by a covering move.
The role of stabilization in \cite{pie95} is to introduce an extra unlinked unknotted component that can be used to carry out moves that break (and restore) the plat form of the link, offering more flexibility. In particular, an appropriately colored stabilizing unknot can realize certain pairs of band sums on the link, providing information about the resulting cobordism. In the following theorem, we relate stabilizations of the moves in Theorem \ref{thm:moves}.

\begin{theorem} \label{thm:sheets}
After the addition of $d-2$ trivial sheets colored $(d\text{-}j\ d\text{+1+}j)$ for $0 \leq j \leq d - 3$, the braids of moves II$_i$, III$_{i,j}$, and IV in Theorem \ref{thm:moves} can be obtained from the trivial braid using only moves N, C, and isotopy. In other words, after $d-2$ stabilizations, moves II$_i$, III$_{i,j}$, and IV can be removed from the statement of Theorem \ref{thm:moves}.
\end{theorem}

The covering moves for ribbon surfaces in $B^4$ of \cite{bp07} are refined in \cite{apz11} and \cite{pz18} to moves preserving braided position of the ribbon surface, and therefore braid closure position of the colored link comprising its boundary. It would be interesting to adapt these moves to instead preserve plat position of the link in the boundary and thereby a Heegaard splitting of the boundary of the cover.

A main motivating question behind the study of covering moves for 3-manifolds remains open, namely if every closed, oriented 4-manifold is a degree 4 branched cover of $S^4$ with embedded branch set. A set of covering moves for colored surfaces in $S^4$ covered by diffeomorphic closed 4-manifolds has also not yet been found. Such a result may provide progress towards exhibiting 4-manifolds as 4-fold covers of $S^4$ with embedded branch set by relating arbitrary degree 5 branched covers to those arising as the stabilization of a degree 4 cover branched over an embedded surface. Covering moves for surfaces in $S^4$ could also be used to construct branched covers of closed 5-manifolds and thereby show that there exists some $n$ for which every (closed, PL, orientable) 5-manifold is a $n$-fold branched cover of $S^5$, analogously to the 4-dimensional case.

\subsubsection*{Organization} In Section \ref{bg}, we establish conventions and background on braids and branched covers. Section \ref{pf:outline} is an outline of the proof strategy. Sections \ref{pf:thm1}, \ref{pf:cn}, \ref{pf:sheets}, and \ref{pf:lens} contain the proofs of Theorems \ref{thm:moves}, \ref{thm:cn}, \ref{thm:sheets}, and Corollary \ref{cor:lens} respectively.

\subsubsection*{Acknowledgements} 

This work was partially supported by NSF grant DMS-2404810. The author would like to thank Maggie Miller and Sashka Kjuchukova for the helpful advice and discussion, as well as Riccardo Piergallini for explanation regarding \cite{bp07}.

\section{Preliminaries}\label{bg}

\subsection{Braid words and half-twists}

We denote by $B_n$ the braid group on $n$ strands.
$B_n$ can be viewed as all possible \textit{geometric braids}, or submanifolds of $D^2 \times I$, up to isotopy, such that the projection onto the $I$ factor is a covering map. Each braid is a cobordism between $(D^2, n\text{ points})$ and itself.
Alternatively, $B_n$ is the mapping class group of a disk with $n$ marked points $\{A_0, \ldots, A_{n-1}\}$. One generating set for $B_n$ is the \textit{Artin generators} $\{\sigma_0, \ldots, \sigma_{n-2}\}$. From the geometric braid perspective, $\sigma_i$ denotes a positive crossing between strands $i$ and $i\text{+}1$, as in Figure \ref{fig:artin gen}.

\medskip
\begin{figure}
\labellist
\small\pinlabel $0$ at 0 -5
\pinlabel $1$ at 24 -5
\pinlabel $i$ at 48 -5
\pinlabel $i\text{+}1$ at 75 -5.5
\pinlabel $n\text{-}2$ at 101 -5
\pinlabel $n\text{-}1$ at 125 -5
\endlabellist

\centering
\includegraphics[width=0.3\textwidth]{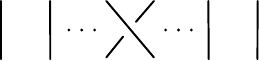}
\medskip
\caption{The $i$-th Artin generator $\sigma_i$.}
\label{fig:artin gen}
\end{figure}

From the mapping class perspective, $\sigma_i$ is defined with respect to a \textit{Hurwitz system}. 

\begin{definition}
Choose a basepoint $A \in \partial D^2$. A \textit{Hurwitz system} is a set of $n$ arcs $\alpha_0, \ldots, \alpha_{n-1}$ intersecting only at $A$, where $\alpha_i$ has endpoints $A$ and $A_i$.
\end{definition}

A Hurwitz system corresponds to a generating set for $\pi_1(D^2 - \{A_0, \ldots, A_{n-1}\})$ where each generator follows an arc $\alpha_i$, goes clockwise around the endpoint $A_i$, then returns along the arc to $A$. We will abuse notation and use $\alpha_i$ to refer to both the Hurwitz arcs and their corresponding generators of $\pi_1(D^2 - \{A_0, \ldots, A_{n-1}\})$. From a Hurwitz system $\alpha_0, \ldots, \alpha_{n-1}$, we can construct a \textit{maximal chain} of arcs $x_0, \ldots, x_{n-1}$, where $x_i$ has endpoints $A_i$ and $A_{i+1}$, and the $\{x_i\}$ do not intersect outside of their shared endpoints. Now, $\sigma_i$ can be defined as a positive half-twist about the arc $x_i$. This also defines the action of $B_n$ on $D^2 - \{A_0, \ldots, A_{n-1}\}$.

In what follows, we will further abuse notation and refer interchangeably to arcs connecting branch points in $D^2$ and the positive half-twist about them, for instance by referring to the generator $\sigma_i$ as $x_i$. 

\begin{figure}
\centering

\begin{subfigure}[b]{0.43\textwidth}
\labellist
\small\pinlabel $\gamma$ at 50 60
\pinlabel $\alpha_0$ at 12 40
\pinlabel $\alpha_1$ at 87 40
\pinlabel $\gamma(\alpha_0)$ at 180 32
\pinlabel $\gamma(\alpha_1)$ at 225 67
\endlabellist
\includegraphics[width=\textwidth]{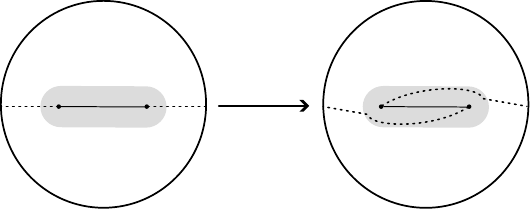}
\caption{}
\end{subfigure}
\hfill
\begin{subfigure}[b]{0.53\textwidth}
\labellist
\small\pinlabel $\gamma$ at 50 70
\pinlabel $\alpha_0$ at 25 30
\pinlabel $\alpha_1$ at 85 20

\pinlabel $\gamma(\alpha_0)$ at 230 75
\pinlabel $\gamma(\alpha_1)$ at 262 20
\endlabellist
\includegraphics[width=\textwidth]{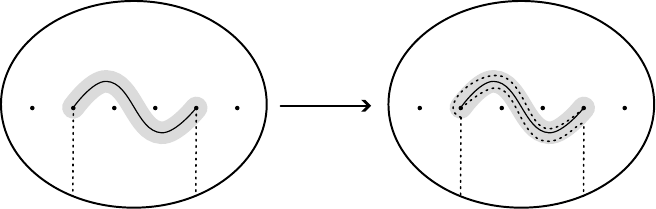}
\caption{}
\end{subfigure}

\caption{Examples of half-twists about different arcs $\gamma$. The positive half-twist is defined on a disk neighbourhood of an arc by rotating it 180 degrees anticlockwise, fixing the arc setwise and exchanging the endpoints.}
\end{figure}

\begin{figure}
\centering

\begin{subfigure}[b]{0.35\textwidth}
\labellist
\small\hair 2pt
\pinlabel $x_0$ at 25 55
\pinlabel $x_1$ at 45 55
\pinlabel $x_2$ at 65 55
\pinlabel $x_3$ at 85 55
\pinlabel $x_4$ at 105 55

\pinlabel $\alpha_0$ at 13 35
\pinlabel $\alpha_1$ at 30 38
\pinlabel $\alpha_2$ at 45 40
\pinlabel $\alpha_3$ at 60 40
\pinlabel $\alpha_4$ at 74 38
\pinlabel $\alpha_5$ at 102 35

\endlabellist
\includegraphics[width=\textwidth]{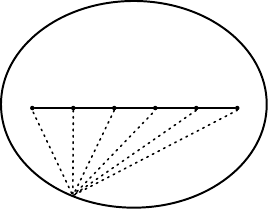}
\caption{}
\end{subfigure}
\quad\quad
\begin{subfigure}[b]{0.35\textwidth}
\labellist
\small\hair 2pt
\pinlabel $x_0$ at 62 76
\pinlabel $x_1$ at 55 57
\pinlabel $x_2$ at 67 45
\pinlabel $x_3$ at 76 40
\pinlabel $x_4$ at 85 58

\pinlabel $\alpha_5$ at 19 37
\pinlabel $\alpha_1$ at 31 30
\pinlabel $\alpha_3$ at 44 30
\pinlabel $\alpha_2$ at 60 30
\pinlabel $\alpha_4$ at 77 28.5
\pinlabel $\alpha_0$ at 97 27

\endlabellist
\includegraphics[width=\textwidth]{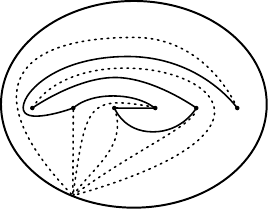}
\caption{}
\end{subfigure}

\caption{A simple and more complicated Hurwitz system $\alpha_0,\ldots ,\alpha_{n-1}$ and corresponding maximal chain $x_0,\ldots ,x_{n-2}$ for a disk with six punctures.}
\end{figure}

Given a half-twist about an arc from $A_i$ to $A_j$, we obtain the corresponding geometric braid by tracing out the motion of each marked point under the half-twist. We can then read off a decomposition into Artin generators, which will be a conjugate of one of the generators by a word describing the arc (see Figure \ref{fig:braidword}).

\begin{figure}[t]
\labellist
\large\hair 2pt
\pinlabel $D^2$ at 148 102
\pinlabel $\times I$ at 145 60
\small\pinlabel $A_0$ at 15 105
\pinlabel $A_3$ at 75 105
\endlabellist

\centering
\includegraphics[width=0.3\textwidth]{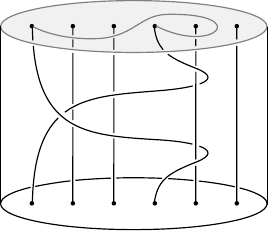}
\caption{The half-twist about this arc from $A_0$ to $A_3$ corresponds to the braid word $[x_0]x_1x_2{x_3}^{-2}$.}
\label{fig:braidword}
\end{figure}

We will use these two perspectives interchangeably throughout the paper.

\subsection{Monodromy and coloring}\label{bg:color}

A branched covering $p:M \to N$ is determined by its degree $d$, branch set $N_0 \subset N$, and monodromy representation $\rho^p: \pi_1(N - N_0) \to S_d$.

When $N=D^2$, a sequence of $n$ elements of $S_d$ corresponding to the image of each generator in a Hurwitz system under $\rho^p$ is called a \textit{monodromy sequence} for that Hurwitz system. The \textit{monodromy} or \textit{coloring} of a point will refer to the image of its corresponding generator. The product of the monodromies of each generator, or equivalently the monodromy of a loop isotopic into the boundary, is called the \textit{total monodromy}. 

\begin{theorem}[\cite{be79}, \cite{mp01}] \label{thm:unique}
Branched covers of $D^2$ are determined by their degree, number of branch points, and conjugacy class of the total monodromy in $S_d$.
\end{theorem}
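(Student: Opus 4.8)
The plan is to prove Theorem~\ref{thm:unique} by reducing an arbitrary branched cover of $D^2$ with $n$ branch points to a standard model depending only on $d$, $n$, and the conjugacy class of the total monodromy, using the fact that the braid group $B_n$ acts transitively (in the appropriate sense) on monodromy sequences with fixed product. First I would recall that a branched cover of $D^2$ is completely encoded, up to equivalence, by a monodromy sequence $(\tau_0,\ldots,\tau_{n-1})$ of transpositions in $S_d$ (simplicity forces transpositions) relative to a chosen Hurwitz system, modulo two types of indeterminacy: (i) global conjugation by an element of $S_d$, coming from relabeling the sheets; and (ii) the \emph{Hurwitz action} of $B_n$ on the sequence, coming from changing the Hurwitz system. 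The Hurwitz move $\sigma_i$ replaces $(\ldots,\tau_{i-1},\tau_i,\ldots)$ by $(\ldots,\tau_i,\tau_i^{-1}\tau_{i-1}\tau_i,\ldots)$ (or the mirror convention), and crucially preserves the ordered product $\tau_0\cdots\tau_{n-1}$, which is the total monodromy. So the degree, $n$, and the conjugacy class of the total monodromy are visibly invariants; the content is that they are \emph{complete} invariants.

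The main step is therefore a normal-form argument: show that any monodromy sequence of $n$ transpositions generating a transitive subgroup of $S_d$ (transitivity is equivalent to connectedness of the cover) can be carried, by Hurwitz moves and global conjugation, to a standard sequence determined by $n$ and the cycle type of the product. Here I would invoke the classical combinatorics of factorizations of permutations into transpositions, essentially the connectedness/transitivity analysis underlying the Clebsch--Hurwitz classification: using Hurwitz moves one can first bring the sequence to the form where the first $d-1$ transpositions are $(1\,2),(2\,3),\ldots,(d-1\,d)$ — this is possible precisely because the transpositions generate a transitive, hence (since they are transpositions on $d$ symbols) the full symmetric group on the support, so the associated ``transposition graph'' is connected and can be reduced to a spanning tree, which can be straightened to a path. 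The remaining $n-d+1$ transpositions can then be pushed to commute past or be absorbed, leaving a tail whose product is a fixed permutation in the conjugacy class of the total monodromy, and a standard such tail can be chosen canonically. Two sequences with the same $n$ and the same total-monodromy conjugacy class are each reduced to this same normal form, hence are Hurwitz-plus-conjugation equivalent, hence define equivalent covers.

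The hard part — or at least the part requiring care rather than routine manipulation — is the bookkeeping in the reduction to the spanning-tree/path form: one must verify that the Hurwitz moves suffice to perform each elementary graph operation (contracting or rerouting an edge of the transposition graph) without losing track of the ordered product, and that the residual tail can genuinely be normalized to depend only on the conjugacy class and not on the order in which transpositions were processed. This is exactly the argument appearing in Berstein--Edmonds \cite{be79} and in \cite{mp01}; I would cite those for the detailed combinatorics and present here only the skeleton. One subtlety worth flagging: connectedness of $M$ must be assumed (or it appears as transitivity of the monodromy image), since a disconnected cover is not determined by the total monodromy alone; given the standing convention in this excerpt that all covers are connected, this is automatic. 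Finally I would note that the ``number of branch points'' really does matter as an invariant — adding a branch point with trivial-in-$S_d$ monodromy is impossible for a simple cover, but a pair of branch points with equal transpositions changes $n$ by $2$ without changing the total monodromy, so $n$ cannot be dropped from the statement.
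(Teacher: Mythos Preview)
The paper does not prove this theorem at all: it is stated as a cited result from \cite{be79} and \cite{mp01} and then immediately used (to conclude that there is a unique simple connected cover of $S^2$ for fixed $d$ and $n$). So there is no ``paper's own proof'' to compare your proposal against.

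That said, your sketch is a correct outline of the classical Clebsch--Hurwitz argument and is precisely the content of the cited references. Your identification of the two indeterminacies (global $S_d$-conjugation and Hurwitz action of $B_n$), the observation that Hurwitz moves preserve the ordered product, and the reduction to a normal form via the connected transposition graph are exactly the ingredients. The caveat you flag about connectedness/transitivity is also well placed and matches the paper's standing conventions. One small point of phrasing: the theorem as stated in the paper is meant under the blanket hypotheses ``simple and connected,'' so your parenthetical ``simplicity forces transpositions'' is not an additional assumption but is already in force; likewise \cite{mp01} treats the non-simple case as well, but that generality is not invoked here. In short, your proposal is fine, but for this paper the appropriate response is simply to cite the result rather than reprove it.
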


Branched covers of $S^2$ are exactly branched covers of $D^2$ with total monodromy the identity in $S_d$. Since we will only work with covers of $S^2$, the above theorem implies that for a fixed degree and number of branch points, there is a unique branched cover of $S^2$.

Similarly, when $N=S^3$, $N_0$ is a link, and the monodromy can be specified by labelling each arc in a diagram of $N_0$ by the image under $\rho^p$ of the Wirtinger generator corresponding to that arc. This labelling must respect the Wirtinger relations, meaning that at every crossing, the label of the under-strand is conjugated by the label of the over-strand. Such a labelling is called a \textit{coloring} and the data $(N_0, \rho^p)$ a \textit{colored link}. Here as well the terms monodromy and coloring will be used interchangeably. A coloring of a link diagram uniquely determines a coloring of any diagram obtained from it by Reidemeister moves.

To compute branched covers it is convenient for links to be written as plat closures of braids, where instead of the usual closure, each adjacent pair of strands is joined by an arc at the top and bottom, as depicted in Figure \ref{fig:plat}. Every link has a plat presentation (see, for example, \cite{bz85}). If a link admits a particular coloring, it is enough to specify that coloring on the top (or bottom) plat arcs.

\begin{figure}
\centering
\includegraphics[width=0.7\textwidth]{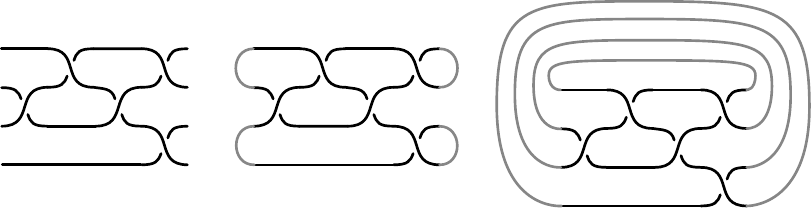}
\caption{A braid, its plat closure, and its standard closure.} \label{fig:plat}
\end{figure}

We focus on simple branched covers, where every point in the branch set has only one point in its preimage where branching occurs, and that point has branching index 2. Equivalently, the monodromy of each generator is a transposition in $S_d$.

The \textit{standard} degree $d$ coloring that we will use is defined in Figure \ref{fig:std}. By Theorem \ref{thm:unique} any colored link describing a connected degree $d$ simple branched cover of $S^2$ can be put in this form. We will see in Section \ref{pf:standard} that only isotopy is required to do so. Up to conjugation inside $S_d$, this coincides with the definition of the standard coloring in \cite{pie91} and Definition 15 of \cite{ww12} (see also \cite{ww12} Corollary 20), but differs from that of \cite{apo03}.

\begin{figure}
\labellist
\small\hair 2pt
\pinlabel $0$ at 24 -5
\pinlabel $1$ at 51 -5
\pinlabel $2$ at 79 -5
\pinlabel $3$ at 106 -5
\pinlabel $2d-6$ at 158 -5
\pinlabel $2d-5$ at 186 -5
\pinlabel $2d-4$ at 214 -5
\pinlabel $2d-3$ at 242 -5
\pinlabel $n-2$ at 298 -5
\pinlabel $n-1$ at 326 -5

\pinlabel $(d\text{-}1\ d)$ at 22 23
\pinlabel $(d\text{-}1\ d)$ at 49 23
\pinlabel $(d\text{-}2\ d\text{-}1)$ at 79 23
\pinlabel $(d\text{-}2\ d\text{-}1)$ at 110 23
\pinlabel $(2\ 3)$ at 161 23
\pinlabel $(2\ 3)$ at 189 23
\pinlabel $(1\ 2)$ at 216 23
\pinlabel $(1\ 2)$ at 244 23
\pinlabel $(1\ 2)$ at 298 23
\pinlabel $(1\ 2)$ at 326 23

\normalsize\pinlabel \textcolor{gray}{$B^3$} at -10 20
\pinlabel $S^2$ at -10 2
\endlabellist

\centering
\bigskip
\includegraphics[width=0.9\textwidth]{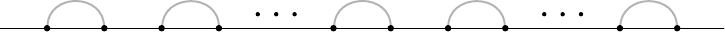}
\medskip
\caption{The standard degree $d$ coloring, shown bounding plat arcs in $B^3$.
For the first $2d-4$ branch points, color the first two by $(d\text{-}1\ d)$, the next two by $(d\text{-}2\ d\text{-}1)$, the next two by $(d\text{-}3\ d\text{-}2)$, and so on until $(2\ 3)$. The remaining branch points are colored $(1\ 2)$.} \label{fig:std}
\end{figure}

\subsection{Liftability and the liftable subgroup of $B_n$} \label{bg:liftable}

A colored link in plat form can be considered to be in bridge position inside $S^3$, inducing a Heegaard splitting of the covering manifold of some genus $g$. The gluing map, an element of the mapping class group of the genus $g$ surface $\MCG(\Sigma_g)$, is determined by the braid comprising the plat.

\begin{figure}
\labellist
\large\hair 2pt
\pinlabel $B^3$ at -10 115
\pinlabel $S^2\times I$ at -20 65
\pinlabel $B^3$ at -10 5
\pinlabel $L$ at 139 70
\pinlabel $p$ at 129 116
\pinlabel $p$ at 129 20

\pinlabel $H_g$ at 260 107
\pinlabel $H_g$ at 260 15
\pinlabel $\beta$ at 105 60
\pinlabel $L(\beta)$ at 220 63
\endlabellist

\centering
\includegraphics[width=0.6\textwidth]{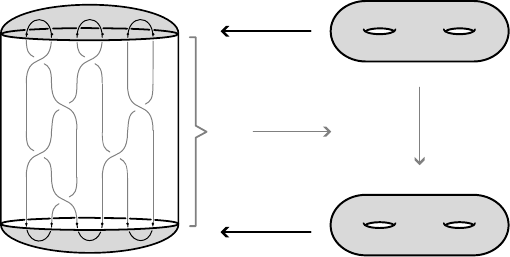}
\caption{Each copy of $B^3$ branched over a set of standardly colored plat arcs is covered by a genus $g=\frac{n}{2}-(d-1)$ handlebody. The braid $\beta$ can be viewed as the path traced out by the marked points under the mapping class $\beta$, which lifts under the branched covering map to some $L(\beta) \in \MCG(\Sigma_g)$.}
\end{figure}

In order for $L(\beta)$ to be well-defined as an element of $\MCG(\Sigma_g)$, the colored braid must be \emph{liftable}.
This will always be true if $\beta$ comes from a colored plat appearing as the branch set of some branched covering, but may not be true for any braid in $B_n$.
Here we will view $B_n$ as the mapping class group of $D^2$ with $n$ marked points.

\begin{definition} A braid $\beta$ is liftable with respect to a branched covering $p$ if there is some $\varphi \in \MCG(\Sigma_g)$ such that $\beta \circ p = p \circ \varphi$. 
Equivalently, $\beta$ is liftable if it preserves the monodromy sequence of some Hurwitz system.
\end{definition}

In terms of a braid diagram, a braid is liftable if the monodromy sequence at the top and the bottom of the braid are the same. 
This guarantees that we can extend the coloring to the plat arcs in each $B^3$.
Intuitively, this condition ensures the same sheets will remain adjacent in the cover.
For a fixed covering map $p$, liftable braids form a finite-index subgroup $L^p(B_n)$ of $B_n$ and the map $L:L^p(B_n) \to \MCG(\Sigma_g)$ is a group homomorphism. As some power of every half-twist is liftable, we can classify half-twists by their minimal positive liftable power.

\begin{definition} A half-twist $t$ about an arc with endpoint $A_i$ in $D^2 - \{A_0, \ldots, A_{n-1}\}$ is called \textit{Type j} if $\rho^p(t(\alpha_i))\rho^p(\alpha_i)$ has order $j$. 
\end{definition}

Being Type 1 is equivalent to being liftable. For simple branched coverings, we must have $j \in \{1,2,3\}$. Type 1 occurs if $\rho^p(t(\alpha_i)) = \rho^p(\alpha_i)$, Type 2 if the transpositions $\rho^p(t(\alpha_i))$ and $\rho^p(\alpha_i)$ share no symbols, and Type 3 if $\rho^p(t(\alpha_i))$ and $\rho^p(\alpha_i)$ have one common symbol. Thus $\rho^p(t^j(\alpha_i))$ will equal $\rho^p(\alpha_i)$, and $t^j$ is liftable.

Birman and Wajnryb \cite{bw85} provided a small generating set for the liftable subgroup of degree 3 simple covers of $D^2$ branched over $n$ points, consisting of the minimal liftable powers of half-twists about $n$ arcs. Mulazzani and Piergallini \cite{mp01} showed that for any branched covering of $D^2$, the liftable subgroup is finitely generated by liftable powers of arcs. Apostolakis \cite{apo03} computed generators for degree 4 simple covers of $D^2$. Wajnryb and Wi\'{s}niowska-Wajnryb \cite{ww12} give a small generating set for the liftable subgroup of simple branched covers of $D^2$ of any degree. We state the latter result, which is a key part of our proof.

\begin{proposition}[\cite{ww12} Theorem 31]\label{prop:l-gens}
The liftable subgroup $L^p(B_n)$ is generated by the minimal positive liftable powers of half-twists about the arcs $x_i, y_{i,j}, z_{i,j}$ of type 2, $w_{i,j}$, and special arcs $s_i$ depending on $p$, given in Figure \ref{fig:arcs}.
\end{proposition}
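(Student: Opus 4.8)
The statement to prove is Proposition~\ref{prop:l-gens}, quoted from \cite{ww12} Theorem 31. Since the excerpt presents this as a quoted external result that is ``a key part of our proof,'' I will not be reproving it from scratch; rather, I sketch how such a generating-set result is established and how it is invoked here.

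\bigskip

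The plan is to cite \cite{ww12} Theorem 31 directly and then explain how the stated arcs fit our conventions, since the proposition is quoted verbatim from that paper. First I would recall the general mechanism, due to Mulazzani--Piergallini \cite{mp01}, that $L^p(B_n)$ is generated by (minimal positive liftable powers of) half-twists about arcs: because $L^p(B_n)$ has finite index in $B_n$ and $B_n$ is generated by half-twists about the $x_i$, a Reidemeister--Schreier-type argument produces a generating set of $L^p(B_n)$ consisting of conjugates of powers of the $x_i$, and each such conjugate is again a power of a half-twist about some arc (the image of $x_i$ under the conjugating braid). The content of \cite{ww12} Theorem 31 is then that this potentially large generating set can be pruned to the explicit finite list $x_i,\, y_{i,j},\, z_{i,j}$ (of Type~2), $w_{i,j}$, and the finitely many special arcs $s_i$ depending on $p$ shown in Figure~\ref{fig:arcs}. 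Each named arc has a prescribed Type $\in\{1,2,3\}$ determined by how the transpositions coloring its endpoints interact under the standard coloring, and the generator contributed is $t, t^2$, or $t^3$ accordingly.

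\bigskip

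The second step is to verify that our setup matches the hypotheses of \cite{ww12}: namely that $p$ is a simple branched covering of $D^2$ (equivalently $S^2$, with total monodromy the identity) of degree $d$ with $n$ branch points, and that our standard coloring (Figure~\ref{fig:std}) agrees, up to conjugation in $S_d$, with the standard coloring used in \cite{ww12} (Definition~15 and Corollary~20 there) --- this compatibility was already noted in Section~\ref{bg:color}. Since the liftable subgroup and its generating set are unchanged under an overall conjugation of the monodromy representation, the generating set of \cite{ww12} transports to our coloring without modification. The arcs in Figure~\ref{fig:arcs} are then drawn in our conventions for the standardly colored disk, and the minimal liftable power of each is read off from its Type as in the definition preceding the proposition.

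\bigskip

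The main obstacle is not in our argument --- which is essentially a citation plus a dictionary between conventions --- but rather lies inside \cite{ww12}: proving that the a priori large Schreier generating set collapses to the short explicit list requires showing that half-twists about all other arcs can be rewritten in terms of the $x_i, y_{i,j}, z_{i,j}, w_{i,j}, s_i$, which is done there by an intricate induction on the combinatorics of arcs in the punctured disk together with explicit relations in $B_n$. For our purposes we treat that theorem as a black box; the only care required on our side is to make sure the special arcs $s_i$ --- the part of the generating set that genuinely depends on $p$ (i.e.\ on $d$ and on the residues of $n$ modulo the relevant quantities) --- are correctly identified for each degree $d\geq 3$, so that later uses of Proposition~\ref{prop:l-gens} in Sections~\ref{pf:thm1} and \ref{pf:sheets} account for every generator.
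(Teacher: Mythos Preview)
Your proposal is correct and matches the paper's treatment: Proposition~\ref{prop:l-gens} is stated in the paper as a direct citation of \cite{ww12} Theorem~31 with no proof given, so treating it as a black box and verifying that the conventions align is exactly what is required. Your additional sketch of the Reidemeister--Schreier mechanism behind \cite{mp01} and \cite{ww12} is helpful context but goes beyond what the paper itself provides.
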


\begin{figure}
\centering
\begin{subfigure}[b]{0.49\textwidth}
	\labellist
	\small
	\pinlabel $x_i$ at 43 19
	\pinlabel $y_{i,j}$ at 85 5
	\pinlabel $z_{i,j}$ at 85 31
	\pinlabel $A_i$ at 26 7
	\pinlabel $A_{i\text{+}1}$ at 57 7
	\pinlabel $A_j$ at 113 7
	\pinlabel $A_{j\text{+}1}$ at 141 7
	\endlabellist

	\includegraphics[width=\textwidth]{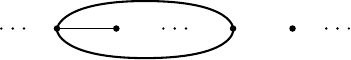}
	\caption{Arcs $x_i$, $y_{i,j}$, and $z_{i,j}$.}
\end{subfigure}

\bigskip\bigskip

\begin{subfigure}[b]{0.7\textwidth}
	\labellist
	\small
	\pinlabel $A_0$ at 10 35
	\pinlabel $A_1$ at 38 35
	\pinlabel $A_2$ at 66 35
	\pinlabel $A_3$ at 94 35
	\pinlabel $A_4$ at 123 35
	\pinlabel $A_5$ at 151 35
	\pinlabel $A_6$ at 180 35
	\pinlabel $A_7$ at 208 35
	\pinlabel $w_{0,2}$ at 30 67
	\pinlabel $w_{2,6}$ at 85 67
	
	\pinlabel $A_0$ at 10 -8
	\pinlabel $A_1$ at 38 -8
	\pinlabel $A_2$ at 66 -8
	\pinlabel $A_3$ at 94 -8
	\pinlabel $A_4$ at 123 -8
	\pinlabel $A_5$ at 151 -8
	\pinlabel $A_6$ at 180 -8
	\pinlabel $A_7$ at 208 -8
	\pinlabel $w_{0,6}$ at 25 24
	\endlabellist

	\includegraphics[width=\textwidth]{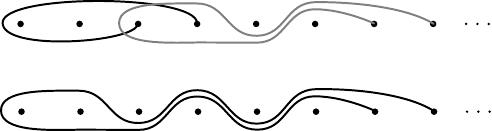}
	\medskip
	\caption{Examples of arcs $w_{i,i+2}$ and $w_{i,i+4}$ (above) and $w_{i,i+6}$ (below) for a standard cover with $d \geq 5$.}
	
\end{subfigure}

\bigskip

\begin{subfigure}[b]{0.56\textwidth}
	\labellist
	\small
	\pinlabel $A_{2d\text{-}4}$ at 85 7
	\pinlabel $A_{2d\text{-}1}$ at 169 7
	\endlabellist
	
	\includegraphics[width=\textwidth]{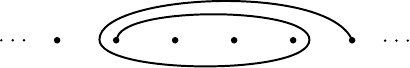}
	\caption{Arc $s_1$.}
\end{subfigure}
\caption{Generating arcs from Proposition \ref{prop:l-gens}.\smallskip \\ 
(a) $x_i$ are the arcs of the maximal chain of a Hurwitz system. $y_{i,j}$ (respectively $z_{i,j}$) starts at $A_i$ and ends at $A_j$, travelling only rightwards and passing in front of (respectively behind) all points $A_k$ in between.
\smallskip \\ (b) $w_{i,j}$ is defined when $i < j-1$ and $x_i,x_j$ are both of Type 1. The arc starts at $A_j$, travels to the left, passing below $A_k$ for which $x_k$ is Type 1 and above other $x_k$. It passes below $x_{i+1}$ and $x_i$, then up and to the right above $x_i$ and $x_{i+1}$, then again below each $A_k$ for which $x_k$ is Type 1 and above other $x_k$, then finally above $A_j$ and ends at $A_{j+1}$.}
\label{fig:arcs}
\end{figure}

For arcs $\gamma, \delta$ and corresponding half-twists $t_\gamma, t_\delta$, we have the relation $t_{t_\gamma(\delta)} = t_\gamma^{-1} t_\delta t_\gamma$. 
Thus applying a liftable element of $B_n$ preserves the Type of arcs; in particular, liftable arcs are taken to liftable arcs. We will later make use of this to reduce the above generating set.

\subsection{Conventions}\label{bg:conventions}

We will use the following conventions and notation.
\begin{itemize}
\item Throughout the paper, a branched covering will be denoted by $p: M \to N$. $N$ will usually be $S^3$ or $D^2$. All branched covers will be smooth, simple, and connected. All manifolds are oriented. As we work in dimensions 2 and 3, we do not distinguish between PL and smooth.
\item The monodromy representation of $p$ will be denoted $\rho^p: \pi_1(N - \text{branch set}) \to S_d$, where $S_d$ is the symmetric group on $d$ symbols.
\item A positive crossing in a braid has the left strand crossing over the right. A positive half-twist is anticlockwise.
\item The liftable subgroup with respect to $p$ is $L^p(B_n) < B_n$ (see Section \ref{bg:liftable}). The lifting map will be denoted $L: L^p(B_n) \to \MCG(\Sigma_g)$ with target the mapping class group of a genus $g$ surface.
\item For group elements $k$ and $h$, $[k]h$ will denote the conjugate $h^{-1}kh$.
\item The degree of the covering will be denoted by $d$.
\item The number of strands in the braid, or equivalently the number of branch points in $D^2$, will be denoted by $n$.
\item The genus of the Heegaard splitting induced by a branched cover, or equivalently the genus of a cover of $D^2$, will be denoted by $g$.
\end{itemize}

A diagram of the following form indicates a colored braid arising as a horizontal portion of a link in plat form. The rest of the link may be arbitrary.
The labels below the diagram specify which strands are involved.
The labels above denote the coloring at the top of each strand. This determines the coloring of the entire diagram (see Section \ref{bg:color}). When the diagram is standardly colored, the coloring labels will be omitted; the top of each strand should then be colored as in Figure \ref{fig:std}.

\bigskip
\begin{figure}
\labellist
\small
\pinlabel $(d\text{-}1\thinspace d)$ at -1 35
\pinlabel $(d\text{-}1\thinspace d)$ at 16 35
\pinlabel $(23)$ at 30 35
\pinlabel $(23)$ at 41 35
\pinlabel $(13)$ at 52 35
\pinlabel $(13)$ at 64 35
\pinlabel $(23)$ at 75 35
\pinlabel $(23)$ at 86 35
\pinlabel $(12)$ at 98 35
\pinlabel $(12)$ at 109 35
\pinlabel $(12)$ at 124 35
\pinlabel $(12)$ at 135 35

\pinlabel $0$ at 5 -5
\pinlabel $2d\text{-}10$ at 29 -4.5
\pinlabel $2d\text{-}4$ at 96 -4.5
\pinlabel $n\text{-}1$ at 136 -5
\tiny\pinlabel $\cdots$ at 17 -5
\pinlabel $\cdots$ at 116 -5
\endlabellist

\centering
\includegraphics[width=0.5\textwidth]{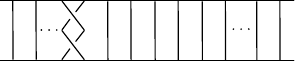}
\medskip
\caption{Example diagram of a portion of a colored link.}
\end{figure}

\section{Proofs}

\subsection{Outline}\label{pf:outline}

We outline the proof strategy for Theorem \ref{thm:moves}, which will allow us to prove Theorem \ref{thm:cn} and Corollary \ref{cor:lens} along the way. Given two links in plat form appearing as the branch locus of branched coverings by the same 3-manifold $M$, we will relate them as follows -- see also \cite{pie91} or \cite{apo03}.

\begin{enumerate}
\item Use isotopy to standardize the coloring of each link (Section \ref{pf:standard}).
\item Both links induce Heegaard splittings of $M$; these are stably equivalent. Carry out this stabilization on the cover by Markov stabilization on the link (Section \ref{pf:standard}).
\item Compute the liftable subgroup with respect to the standard coloring. Adapt the small generating set for $L^p(B_n)$ in Proposition \ref{prop:l-gens} to our setting (Section \ref{pf:gens}).

At this stage, we have the tools to prove Theorem \ref{thm:cn} and Corollary \ref{cor:lens}, but defer the proofs until Sections \ref{pf:cn} and \ref{pf:lens} respectively.

Examples of the generating set of $L^p(B_n)$ for small $d$ are provided in Section \ref{pf:example}.

\item Determine the image of each generator of $L^p(B_n)$ under the lifting homomorphism $L: L^p(B_n) \to \MCG(\Sigma_g)$ (Section \ref{pf:lift}).
\item The two induced Heegaard splittings are the same genus and equivalent, but possibly have different gluing homeomorphisms. Add a braid using isotopy and move C to carry out the homeomorphism extending over the handlebodies required to make the gluing maps the same (Section \ref{pf:gluing})
\item Now, since both braids lift to the same element of $\MCG(\Sigma_g)$, any difference between them must come from something in the kernel of $L$. It suffices to find a set of moves that allow the trivial braid to be replaced by any normal generator of this kernel. Compute a set of normal generators for $\ker L$ (Section \ref{pf:kernel}).
\item Lastly, show each normal generator of $\ker L$ can be added to the trivial braid using the moves of Theorem \ref{thm:moves}. Check these moves are covering moves in the first place (Section \ref{pf:moves}). 
\end{enumerate}

Theorem \ref{thm:sheets} is proven in Section \ref{pf:sheets}.

\subsection{Proof of Theorem \ref{thm:moves}}\label{pf:thm1}

\subsubsection{Normalization of induced Heegaard splittings}\label{pf:standard}

We first show the links' colorings can be standardized.

\begin{lemma}
The top (or equivalently, bottom) of a colored plat representing a connected covering can be made standard using only local isotopies.
\end{lemma}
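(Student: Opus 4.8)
The statement asserts that the colored plat arcs at the top of a link diagram representing a connected degree $d$ simple branched cover of $S^3$ can be brought into the standard form of Figure \ref{fig:std} using only local isotopies (i.e.\ without changing the link type, only rearranging the top tangle). The plan is to argue purely in terms of the boundary $S^2$ with $n$ marked points and the induced branched cover of $B^3$ over the plat arcs, which by Theorem \ref{thm:unique} is determined by $d$, $n$, and the conjugacy class of the total monodromy --- and since this is a cover of $S^2$, the total monodromy is the identity, so for fixed $d$ and $n$ there is a unique such cover. The coloring data at the top is exactly a monodromy sequence $(\tau_0,\ldots,\tau_{n-1})$ of transpositions in $S_d$ assigned to the Hurwitz generators dual to the plat arcs, subject to the constraint that adjacent pairs $\tau_{2k},\tau_{2k+1}$ (the two strands joined by a plat arc) are equal (so the coloring extends over the arc), and that the total product is the identity.

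The key step is a normal-form argument for such sequences under the moves realizable by local isotopy of the plat. A local isotopy that slides one plat cap past an adjacent one, or braids two adjacent strands, acts on the monodromy sequence by a Hurwitz-type move: it replaces a consecutive pair $(\sigma,\tau)$ by $(\sigma\tau\sigma^{-1},\sigma)$ or $(\tau,\tau^{-1}\sigma\tau)$, and it can also simultaneously conjugate a matched pair $(\sigma,\sigma)$ bounding a cap. First I would observe that connectivity of the cover means the subgroup of $S_d$ generated by all the $\tau_i$ is transitive on $\{1,\ldots,d\}$. Then I would show by induction on $d$: using Hurwitz moves supported near the caps, one can arrange that the first cap is colored $(d{-}1\ d)$; having isolated the symbol $d$ into the leftmost two strands, the remaining caps generate a transitive subgroup of $S_{d-1}$ on $\{1,\ldots,d-1\}$ after we check $d$ no longer appears elsewhere (using the matched-pair structure and that the total product fixes $d$), and we recurse, peeling off $(d{-}2\ d{-}1)$, then $(d{-}3\ d{-}2)$, and so on down to $(2\ 3)$, after which every remaining transposition must be $(1\ 2)$. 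At each stage one must also verify that each required Hurwitz/conjugation move is actually induced by an ambient isotopy of the plat that fixes the rest of the link --- this is where one invokes that plat caps at the top live in a collar $S^2\times I\subset B^3$ disjoint from the rest of the diagram, so any braiding/sliding of the top strands within that collar is a local isotopy.

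The main obstacle I expect is bookkeeping the interaction between the two constraints: the matched-pair condition (each cap bounds equal colors) and the Hurwitz moves, which a priori do not preserve matched-pair structure when applied across a cap boundary. The fix is to only ever apply Hurwitz moves that are "compatible" with the plat structure --- conjugating a whole cap, or commuting two adjacent caps --- and to check these generate enough of the Hurwitz action to reach the normal form; equivalently, one reduces to the classical statement that simple, transitive monodromy sequences of transpositions with trivial product are all Hurwitz-equivalent to the standard one (a fact essentially contained in \cite{ww12}, cf.\ their Corollary 20 and Definition 15), and then upgrades "Hurwitz-equivalent" to "related by plat-preserving local isotopy." I would also remark that the "top equivalently bottom" clause is immediate by reflecting the diagram, and that the genus $g = \tfrac{n}{2}-(d-1)$ of the induced handlebody cover is unaffected, consistent with later sections.
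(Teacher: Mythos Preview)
Your overall strategy---induct on $d$, isolate the symbol $d$ into the leftmost cap, then recurse on a degree $d{-}1$ problem---is the same as the paper's. The paper frames it via the graph $\Gamma$ on vertices $\{1,\ldots,d\}$ with an edge for each cap's color, so that connectivity of the cover is connectivity of $\Gamma$, and it exhibits the two needed plat-compatible local isotopies (swap two adjacent caps; conjugate one cap's color by an adjacent cap's color) directly in a figure rather than extracting them from abstract Hurwitz moves.

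There is a gap in your isolation step. You say that once the first cap is $(d{-}1\ d)$ you ``check $d$ no longer appears elsewhere (using the matched-pair structure and that the total product fixes $d$).'' But neither condition gives you this: each cap contributes a pair of equal transpositions, so the total product is trivially the identity regardless of how many caps involve $d$. For example, with $d=4$ the cap sequence $(3\ 4),(1\ 4),(2\ 3),(1\ 2)$ is connected, already has first cap $(d{-}1\ d)$, yet $d$ still appears in the second cap. The paper closes this with two further explicit steps: first eliminate any \emph{other} cap colored $(d{-}1\ d)$ by conjugating it with some cap incident to $d{-}1$ or $d$ but not both (such a cap exists by connectivity of $\Gamma$); then eliminate each remaining cap $(i\ d)$ by conjugating with the leftmost $(d{-}1\ d)$ to obtain $(i\ d{-}1)$. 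Only after this is $d$ a leaf of $\Gamma$ attached at $d{-}1$, so that deleting it leaves a connected graph on $\{1,\ldots,d{-}1\}$ and the induction proceeds. Your fallback of invoking Hurwitz equivalence from \cite{ww12} and then upgrading to plat-compatible isotopy would also work, but is less direct than just writing down these two conjugation steps.
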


We note that the following proof is very similar to that of Theorem A of \cite{mp01}, the difference being we can carry out elementary moves on the graphs using isotopy of plats.

\begin{proof}
We may view the coloring as a graph $\Gamma$ with vertices $\{1, \ldots, d\}$ and an edge between vertices $i, j$ for each arc that is colored $(i\ j)$. As the branched covering is connected, $\Gamma$ is a connected graph. 

We will induct on the degree of the covering. The base case, $d=3$, has been shown in \cite{pie91}; arcs may be reordered or their colors changed as in Figure \ref{fig:std-moves}. For the induction step, using the two isotopies in the figure, we do the following.

\bigskip
\begin{figure}
\labellist
\small\hair 2pt
\pinlabel $(i\ j)$ at 7 65
\pinlabel $(k\ l)$ at 36 65

\pinlabel $(i\ j)$ at 92 65
\pinlabel $(k\ l)$ at 122 65

\pinlabel $(i\ j)$ at 191 65
\pinlabel $(j\ k)$ at 221 65

\pinlabel $(i\ j)$ at 277 65
\pinlabel $(j\ k)$ at 307 65
\endlabellist

\centering
\includegraphics[width=0.7\textwidth]{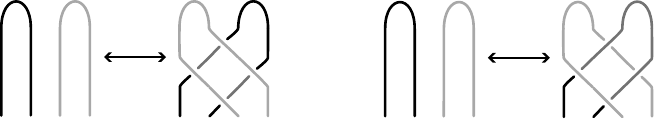}
\caption{Local isotopies reordering or changing coloring of plat arc. $i, j, k$, and $l$ need not be different.}
\label{fig:std-moves}
\end{figure}

\begin{enumerate}
\item Move all arcs with colorings involving $d$ to the left hand side. If some arc is colored $(d\text{-}1\ d)$, make it the leftmost arc. If not, choose an arc colored $(i\ d)$ for some $i$. As $\Gamma$ is connected, there is a path from vertex $i$ to $d\text{-}1$. Conjugate by the arc corresponding to each edge on the path to obtain an arc colored $(d\text{-}1\ d)$ and make that the leftmost arc. (If the path passes through edges $\{i, j_1, \ldots, j_k, d\text{-}1\}$ in order, conjugating will send $(i\ d) \mapsto (j_1\ d) \mapsto \cdots \mapsto (j_k\ d) \mapsto (d\text{-}1\ d)$.)
\item Next, we eliminate all other arcs colored $(d\text{-}1\ d)$. As $\Gamma$ is connected, either vertex $d\text{-}1$ or $d$ is joined to some other vertex $i \neq d\text{-}1, d$. Use that to conjugate all but one arc colored $(d\text{-}1\ d)$ to $(i\ d\text{-}1)$ or $(i\ d)$.
\item Finally, we eliminate all other arcs involving $d$, say colored $(i\ d)$, by conjugating with the arc $(d\text{-}1\ d)$ to obtain one colored $(i\ d\text{-}1)$. We are now reduced to the degree $d\text{-}1$ case.
\end{enumerate}
\end{proof}

Now we can assume that our link is colored as in Figure \ref{fig:std}. Next, carry out Heegaard stabilization in the cover by performing the required number of Markov stabilizations on the rightmost strand. 

\subsubsection{Generators for $L^p(B_n)$}\label{pf:gens}

We now compute an explicit generating set for standard $\rho^p$ from those given in Proposition \ref{prop:l-gens}.

\begin{theorem} \label{thm:gens}
For $\rho^p$ standard of degree $d$, a generating set for $L^p(B_n)$ is the union of the following:
\begin{align*}
\{ x_i & \mid i\geq 2d-4\text{ or } i < 2d-4 \text{ and even} \}
\\ \{ {x_i}^3 & \mid i\text{ odd and} < 2d-4 \}
\\ \{ {y_{i,j}}^2 & \mid i\text{ odd}, j-i\geq 3\text{ and odd}, j \leq 2d-4 \}
\\ \{ {z_{i,j}}^2 & \mid i\text{ odd}, j-i\geq 7\text{ and odd}, j \leq 2d-4 \}
\\ \{ w_{i,j} & \mid i,j\text{ even and }j \leq 2d-4 \}
\\ \{ s_1 \} &
\end{align*}
\end{theorem}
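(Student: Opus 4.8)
The plan is to start from the generating set of Proposition \ref{prop:l-gens} and carry out two reductions: first, specialize the ``Type'' conditions on the various arcs to the standard coloring; second, discard generators that are either liftable already (hence contribute nothing as a ``minimal positive liftable power'') or can be expressed in terms of the remaining generators using the conjugation relation $t_{t_\gamma(\delta)} = t_\gamma^{-1} t_\delta t_\gamma$ together with the fact that liftable elements preserve Type. The key input is the explicit description of the standard coloring in Figure \ref{fig:std}: consecutive branch points come in pairs sharing a common transposition, with the pattern $(d\text{-}1\ d),(d\text{-}1\ d),(d\text{-}2\ d\text{-}1),(d\text{-}2\ d\text{-}1),\ldots,(2\ 3),(2\ 3),(1\ 2),(1\ 2),\ldots,(1\ 2)$, so that adjacent colors $\rho^p(\alpha_i),\rho^p(\alpha_{i+1})$ either coincide (when $i$ is even, $i<2d-4$, or $i\geq 2d-4$) or share exactly one symbol (when $i$ is odd, $i<2d-4$).

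First I would handle the chain arcs $x_i$. The half-twist $x_i$ sends $\alpha_i\mapsto \alpha_{i+1}$ (up to the appropriate conjugation), so its Type is governed by whether $\rho^p(\alpha_i)$ and $\rho^p(\alpha_{i+1})$ coincide or share one symbol. By the pattern above, $x_i$ is Type 1 precisely when $i$ is even or $i \geq 2d-4$, and Type 3 (hence minimal liftable power $3$) when $i$ is odd and $i<2d-4$. This gives the first two lines of the claimed set, noting that a Type 1 arc is liftable and so enters the generating set as $x_i$ itself, not a proper power. Next I would treat $y_{i,j}$ and $z_{i,j}$. Proposition \ref{prop:l-gens} only asks for those of Type 2; I would compute, from the combinatorics of which points an arc from $A_i$ to $A_j$ passes in front of or behind and the resulting conjugated transposition, exactly when $\rho^p$ makes $y_{i,j}$ (resp.\ $z_{i,j}$) Type 2. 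This should pin down the parity conditions ``$i$ odd, $j-i$ odd'' and the bounds $j-i\geq 3$ (resp.\ $\geq 7$) and $j\leq 2d-4$ appearing in lines 3 and 4 — the smaller gaps and the out-of-range indices either fail to be Type 2 or reduce to $x$-arcs and other generators via the conjugation relation. For the $w_{i,j}$: by its definition $w_{i,j}$ requires $x_i,x_j$ both Type 1, which by the $x$-analysis forces $i,j$ even (or $\geq 2d-4$), and one checks $w_{i,j}$ with $j>2d-4$ is redundant, yielding line 5. Finally, the special arcs $s_i$ depend on $p$; for the standard coloring I would identify that only $s_1$ (the arc of Figure \ref{fig:arcs}(c), from $A_{2d-4}$ to $A_{2d-1}$, straddling the transition from the $(2\ 3)$-block to the $(1\ 2)$-block) is needed, the others being liftable or expressible in terms of the rest.

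The main obstacle I expect is the bookkeeping for the $y$, $z$, and $w$ arcs: showing not just that the listed powers lie in $L^p(B_n)$ and have the stated minimal liftable exponent, but that the generators I am \emph{discarding} are genuinely redundant — i.e.\ that applying Proposition \ref{prop:l-gens} verbatim and then removing everything outside the claimed list still leaves a generating set. This requires, for each discarded arc $\gamma$, either exhibiting $t_\gamma$ (or its minimal liftable power) as a word in the retained generators, or arguing that $\gamma$ is already liftable and its half-twist is itself a product of retained ones via $t_{t_\gamma(\delta)} = t_\gamma^{-1}t_\delta t_\gamma$ applied to chain arcs. I would organize this by exploiting that the retained $x_i$ (even $i$, or $i\geq 2d-4$) act transitively enough on arcs within each color block to move any candidate $y,z,w$ arc to one already on the list while preserving Type, so that no information is lost. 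A careful case analysis on the positions of $i,j$ relative to the threshold $2d-4$ and relative to each other should close this; the bounds $j-i\geq 3$ and $j-i\geq 7$ in particular should fall out of checking that shorter $y$- and $z$-arcs either coincide with an $x$-arc power or are conjugates of one under a retained generator.
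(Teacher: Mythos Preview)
Your approach is the same as the paper's: start from Proposition~\ref{prop:l-gens}, compute Types under the standard coloring, and prune via the conjugation relation. The handling of the $x_i$, the restriction $j\leq 2d-4$, and the identification of $s_1$ all match (the paper pins down $s_1$ by noting the excess of the standard coloring is $2d-4$, even, so only $s_1$ appears among the special arcs of \cite{ww12}).

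There is one point where your stated mechanism will not close the argument. You propose that the retained $x_i$ ``act transitively enough'' to reduce all short $y$- and $z$-arcs to listed ones, and that the bound $j-i\geq 7$ for $z$ will fall out this way. It does not: $z_{i,i+3}$ and $z_{i,i+5}$ (for $i$ odd) are genuinely Type~2 and cannot be moved to a listed arc by $x$-twists alone. The paper instead expresses them as conjugates of the \emph{$y$-arcs} $y_{i,i+4}$ and $y_{i-1,i+6}$ by the liftable $w$-arc $w_{i+1,i+3}$ (Figure~\ref{fig:z from w}). So the $w_{i,j}$ are not just passengers in the generating set --- they are needed to carry out the $z$-reduction. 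Your sentence ``conjugates of one under a retained generator'' is literally compatible with this, but your surrounding text points only to $x$-arcs, which would leave a gap.

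A second small point: you assert $w_{i,j}$ is in the list but do not say why it is Type~1 rather than Type~2 or 3 (its definition only requires $x_i,x_j$ Type~1, not $w_{i,j}$ itself). The paper verifies this by writing $w_{i,j}$ as a braid word in the $x_k$ and observing the generators pair off so that $\rho^p(w_{i,j})=\rho^p(x_j)$.
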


\begin{proof}
It is clear from Figure \ref{fig:arcs} that $x_i$ is Type 3 for $i$ odd and $< 2d-4$, and Type 1 otherwise. 
Then $y_{i,j}$, $z_{i,j}$, and $w_{i,j}$ for $i \geq 2d-4$ need not be included in the generating set, as they can be obtained from the minimal liftable powers of the $x_i$. Also note that if an arc shares an endpoint with a Type 1 arc, we may twist about the Type 1 arc to obtain another arc in $L^p(B_n)$. For this reason, we need not include $y_{i,j}$, $z_{i,j}$, and $w_{i,j}$ for $j > 2d-4$.

\begin{figure}
\centering

\begin{subfigure}[b]{0.32\textwidth}
\labellist
\small\hair 2pt
\pinlabel $j$ at 27 0
\pinlabel $j+1$ at 57 0
\pinlabel $j+2$ at 86 0
\endlabellist
\includegraphics[width=\textwidth]{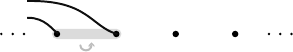}
\smallskip
\caption{Half-twist $w_{i,j}$ about the liftable arc $x_j$.}
\end{subfigure}
\hfill
\begin{subfigure}[b]{0.32\textwidth}
\labellist
\small\hair 2pt
\pinlabel $j+1$ at 57 0
\pinlabel $j+2$ at 86 0
\endlabellist
\includegraphics[width=\textwidth]{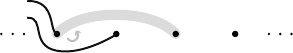}
\smallskip
\caption{Half-twist about the liftable arc $z_{j,j+2}$.}
\end{subfigure}
\hfill
\begin{subfigure}[b]{0.32\textwidth}
\labellist
\small\hair 2pt
\pinlabel $j+1$ at 57 0
\pinlabel $j+2$ at 86 0
\endlabellist
\includegraphics[width=\textwidth]{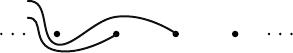}
\smallskip
\caption{The arc $w_{i,j+1}$.}
\bigskip
\end{subfigure}

\caption{Obtaining $w_{i,j+1}$ from $w_{i,j}$ using the other generators, for $j\geq 2d-4$.}
\end{figure}

By the above, we can also disregard $y_{i,j}$ for $i$ even, as they can be obtained by twisting $x_i$ or $y_{i,j}$ for $i$ odd about a Type 1 arc $x_i$. For $i$ odd, $y_{i,i+2}$ is Type 3 and obtained using liftable powers of the arcs $x_i$. The remaining cases, $y_{i,j}$ for $i$ odd, $j-i$ odd and $\geq 3$, are Type 2 and must be included in our generating set.

By definition, the possible arcs $w_{i,j}$ are those with $i,j$ even and $j-i$ even. To see that all $w_{i,j}$ are Type 1, note that in terms of the Artin generators, the half-twist about $w_{i,j}$, for $j \leq 2d-4$, is 
$$[x_j](x_{j-1} {x_{j-2}}^{-1})(x_{j-3} {x_{j-4}}^{-1})\cdots (x_{i+3} {x_{i+2}}^{-1}) (x_{i+1} x_i^2 x_{i+1}) ({x_{i+2}}^{-1} x_{i+3}) \cdots ({x_{j-2}}^{-1} x_{j-1})$$
As $x_i$ is Type 1 by definition, the generators occur in pairs with the same monodromy, $\rho^p(w_{i,j}) = \rho^p(x_j)$. Then $\rho^p(w_{i,j}(\alpha_j)) = \rho^p(x_j(\alpha)) = \rho^p(\alpha_{j+1}) = \rho^p(\alpha_j)$.

By \cite{ww12} Lemma 30, $z_{i,j}$ is Type 2 if and only if there is an arc $x_k$ of type 1 strictly between $A_i$ and $A_j$. As in the $y_{i,j}$ case, we may omit $i$ even, leaving $z_{i,j}$ with $i$ odd and $j-i$ odd. As $z_{i,i+3}$ and $z_{i,i+5}$ can be obtained from $y_{i,i+4}$ and $y_{i-1,i+6}$ respectively, using $w_{i+1,i+3}$ as shown in Figure \ref{fig:z from w}, we need only include $j-i \geq 7$.

\begin{figure}[t]
\centering

\begin{subfigure}[b]{0.45\textwidth}
\labellist
\small\hair 2pt
\pinlabel $i+1$ at 84 -5
\pinlabel $i+3$ at 140 -5
\pinlabel $i+4$ at 170 -5
\endlabellist
\includegraphics[width=\textwidth]{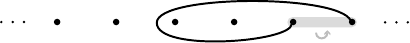}
\bigskip\bigskip
\caption{Half-twist $w_{i+1,i+3}$ about $x_{i+3}$. As both arcs are liftable, the resulting arc is also liftable.}
\end{subfigure}
\hfill
\begin{subfigure}[b]{0.45\textwidth}
\labellist
\small\hair 2pt
\pinlabel $i$ at 56 -5
\pinlabel $i+1$ at 84 -5
\pinlabel $i+3$ at 140 -5
\pinlabel $i+4$ at 170 -5
\endlabellist
\includegraphics[width=\textwidth]{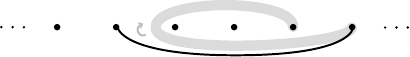}
\bigskip

\labellist
\small\hair 2pt
\pinlabel $i$ at 56 -5
\pinlabel $i+1$ at 84 -5
\pinlabel $i+3$ at 140 -5
\pinlabel $i+4$ at 170 -5
\endlabellist
\includegraphics[width=\textwidth]{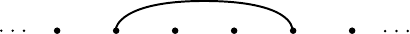}
\smallskip
\caption{Half-twist $y_{i,i+4}$ about the arc from (a) to obtain $z_{i,i+3}$. These will both be Type 2 arcs.}
\end{subfigure}

\caption{Obtaining $z_{i,i+3}$ from $y_{i,i+4}$. A similar method can be used to obtain $z_{i,i+5}$ using an arc like in (a) and its reflection in a vertical axis, which is also Type 1.}
\label{fig:z from w}
\end{figure}

As the excess (defined in Definition 15 of \cite{ww12}) of the standard coloring is $2d-4$, which is even, the only special arc that appears as a generator (when $d > 2$) is $s_1$, which has type 1. ($s_1$ is the analogue of $\delta$ in \cite{pie91}, or $\delta_6$ in \cite{apo03}).
\end{proof}

It is possible the generating set can be further reduced, but this suffices for this paper.

\subsubsection{Examples}\label{pf:example}
We now give examples, recovering the generators from \cite{bw85}, \cite{apo03}, and \cite{ww08} in degrees 3 and 4.

\begin{example}[$d=3$]
The generating set is $\{x_0, {x_1}^3, x_2, \ldots, x_{n-2}, w_{0,2}, s_1\}$, as no $y_{i,j}$ or $z_{i,j}$ fit the criteria of Proposition \ref{thm:gens}. 
\end{example}

The above generating set matches that of \cite{ww08}, but in fact $w_{0,2}$ is not necessary in our setting, as it is isotopic in $S^2$ to the arc in Figure \ref{fig:w-apo}, obtainable using $x_2, \ldots, x_{n-2}$. Thus we can reduce to a generating set matching that of \cite{bw85}.

\begin{figure}
\labellist
\small
\pinlabel $0$ at 2 -5
\pinlabel $1$ at 30 -5
\pinlabel $2$ at 58 -5
\pinlabel $3$ at 87 -5
\pinlabel $\cdots$ at 115 -5
\pinlabel $n\text{-}2$ at 144 -5
\pinlabel $n\text{-}1$ at 166 -5
\endlabellist
\centering
\includegraphics[width=0.45\textwidth]{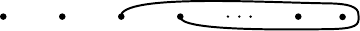}
\medskip
\caption{A liftable arc isotopic in $S^2$ to $w_{0,2}$ in degree 3.}
\label{fig:w-apo}
\end{figure}

\begin{example}[$d=4$]
The generating set is $\{x_0, {x_1}^3, x_2, {x_3}^3, x_4, \ldots, x_{n-2}, {y_{1,4}}^2, w_{0,2}, w_{0,4}, w_{2,4}, s_1\}$, and no $z_{i,j}$ fit the criteria of Proposition \ref{thm:gens}.
\end{example}

In the notation of \cite{ww08} (Theorem 3, Type 7), $y_{1,4}$ is $y$ and $s_1$ is $d$. $w$ can be obtained by half-twisting $w_{0,2}$ about $x_2$, $w_1$ by half-twisting $w_{2,4}$ by $x_4$, and $e$ can be obtained from $w_{0,4}$ using the other generators as shown in Figure \ref{fig:d4 gens}.

\begin{figure}
\centering

\begin{subfigure}[b]{0.32\textwidth}
\includegraphics[width=\textwidth]{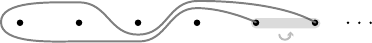}
\caption{Half-twist $w_{0,4}$ about $x_4$.}
\end{subfigure}
\hfill
\begin{subfigure}[b]{0.32\textwidth}
\includegraphics[width=\textwidth]{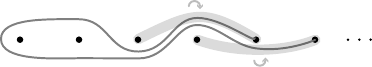}
\caption{Apply ${z_{2,4}}^{-3}$ and ${y_{3,5}}^3$.}
\end{subfigure}
\hfill
\begin{subfigure}[b]{0.32\textwidth}
\includegraphics[width=\textwidth]{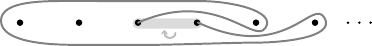}
\caption{Apply $x_2^{-1}$.}
\end{subfigure}

\bigskip

\begin{subfigure}[b]{0.32\textwidth}
\includegraphics[width=\textwidth]{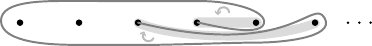}
\caption{Apply ${x_3}^3$ and ${y_{2,5}}^{-3}$.}
\end{subfigure}
\quad\quad
\begin{subfigure}[b]{0.32\textwidth}
\includegraphics[width=\textwidth]{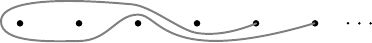}
\caption{The arc $e$.}
\end{subfigure}

\caption{Obtaining $e$ as a conjugate of $w_{0,4}$.}
\label{fig:d4 gens}
\end{figure}

Apostolakis \cite{apo03} gives generators for the quotient of $L^p(B_n)$ obtained by equating braids that differ by a sequence of moves C and N. In this quotient, our generating set becomes $\{x_0, x_2, x_4, \ldots, x_{n-2}, s_1\}$, matching that of \cite{apo03}. We can again make use of the spherical relation to express $w_{0,2}, w_{0,4}$, and $w_{2,4}$ in terms of the other generators as shown in Figures \ref{fig:w-iso-2} and \ref{fig:w-iso-4}. Note that as the procedure to do so relies on $z_{i,j}^2$, which is only trivial up to move N, we cannot remove these from the generating set of the liftable subgroup itself.

\begin{figure}
\centering

\begin{subfigure}[b]{0.6\textwidth}
\labellist
\footnotesize
\pinlabel $0$ at 2 -5
\pinlabel $1$ at 25 -5
\pinlabel $i$ at 185 -5
\pinlabel $i\text{+}2$ at 245 -5.5
\pinlabel $n\text{-}2$ at 377 -5
\pinlabel $n\text{-}1$ at 401 -5
\endlabellist
\includegraphics[width=\textwidth]{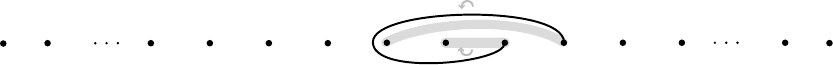}
\vspace{-5pt}
\caption*{Half-twist $w_{i,i+2}$ about ${z_{i,i+3}}^3$ and ${x_{i+1}}^{-3}$.}

\bigskip
\labellist
\footnotesize
\pinlabel $0$ at 2 -5
\pinlabel $1$ at 25 -5
\pinlabel $i$ at 185 -5
\pinlabel $i\text{+}1$ at 215 -5.5
\pinlabel $n\text{-}2$ at 377 -5
\pinlabel $n\text{-}1$ at 401 -5
\endlabellist
\includegraphics[width=\textwidth]{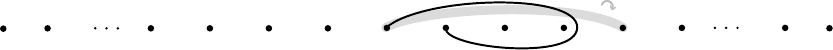}
\vspace{-5pt}
\caption*{Apply $z_{i,j}^2$ for $i+4 \leq j \leq n\text{-}1$.}

\bigskip
\labellist
\footnotesize
\pinlabel $0$ at 2 -5
\pinlabel $1$ at 25 -5
\pinlabel $i$ at 185 -5
\pinlabel $i\text{+}1$ at 215 -5.5
\pinlabel $n\text{-}2$ at 377 -5
\pinlabel $n\text{-}1$ at 401 -5
\endlabellist
\includegraphics[width=\textwidth]{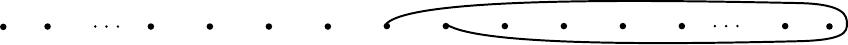}
\vspace{-5pt}
\caption*{Perform isotopy around $S^2$.}

\bigskip
\labellist
\footnotesize
\pinlabel $0$ at 7 -5
\pinlabel $1$ at 31 -5
\pinlabel $i$ at 192 -5
\pinlabel $i\text{+}1$ at 225 -5.5
\pinlabel $n\text{-}2$ at 383 -5
\pinlabel $n\text{-}1$ at 410 -5
\endlabellist
\includegraphics[width=\textwidth]{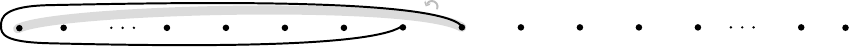}
\vspace{-5pt}
\caption*{Apply $z_{j,i+1}^2$ for $0 \leq j \leq i-3$.}

\bigskip
\labellist
\footnotesize
\pinlabel $0$ at 2 -5
\pinlabel $1$ at 25 -5
\pinlabel $i\text{-}2$ at 128 -5.5
\pinlabel $i$ at 185 -5
\pinlabel $n\text{-}2$ at 377 -5
\pinlabel $n\text{-}1$ at 401 -5
\endlabellist
\includegraphics[width=\textwidth]{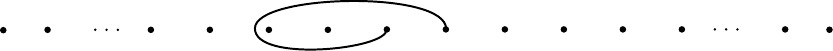}
\vspace{-5pt}
\caption*{The arc $w_{i-2,i}$.}
\medskip
\caption{Reducing $w_{i,i+2}$ to $w_{i-2,i}$.}
\end{subfigure}
\hfill
\begin{subfigure}[b]{0.325\textwidth}

\labellist
\footnotesize
\pinlabel $0$ at 5 -5
\pinlabel $2$ at 64 -5
\pinlabel $n\text{-}2$ at 197 -5
\pinlabel $n\text{-}1$ at 222 -5
\endlabellist
\includegraphics[width=\textwidth]{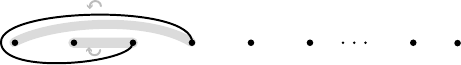}
\vspace{-5pt}
\caption*{Half-twist $w_{0,2}$ about ${z_{0,3}}^3$ and ${x_1}^{-3}$.}

\bigskip
\labellist
\footnotesize
\pinlabel $0$ at 4 -5
\pinlabel $1$ at 32 -5
\pinlabel $n\text{-}2$ at 195 -5
\pinlabel $n\text{-}1$ at 220 -5
\endlabellist
\includegraphics[width=\textwidth]{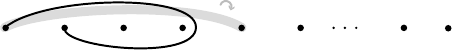}
\vspace{-5pt}
\caption*{Apply $z_{0,4}^2$.}

\bigskip
\labellist
\footnotesize
\pinlabel $0$ at 4 -5
\pinlabel $1$ at 32 -5
\pinlabel $n\text{-}2$ at 195 -5
\pinlabel $n\text{-}1$ at 220 -5
\endlabellist
\includegraphics[width=\textwidth]{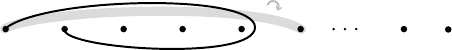}
\vspace{-5pt}
\caption*{Apply $z_{0,j}^2$ for $5 \leq j \leq n\text{-}1$.}

\bigskip
\labellist
\footnotesize
\pinlabel $0$ at 3 -5
\pinlabel $1$ at 31 -5
\pinlabel $n\text{-}2$ at 193 -5
\pinlabel $n\text{-}1$ at 218 -5
\endlabellist
\includegraphics[width=\textwidth]{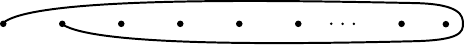}
\vspace{-5pt}
\caption*{Perform isotopy around $S^2$.}

\bigskip
\labellist
\footnotesize
\pinlabel $0$ at 3 -15
\pinlabel $1$ at 31 -15
\pinlabel $n\text{-}2$ at 193 -15
\pinlabel $n\text{-}1$ at 218 -15
\endlabellist
\includegraphics[width=\textwidth]{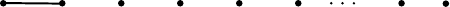}
\vspace{-5pt}
\caption*{The arc $x_0$.}
\medskip
\medskip
\caption{Reducing $w_{0,2}$ to $x_0$.}
\label{fig:w-iso-2b}
\end{subfigure}

\caption{Obtaining $w_{i,i+2}$ from $x_j$ for $j$ even, C, and N. All arcs are standardly colored.}
\label{fig:w-iso-2}
\end{figure}

\bigskip

\begin{figure}
\centering

\begin{subfigure}[b]{0.517\textwidth}
\labellist
\footnotesize
\pinlabel $0$ at 2 -5
\pinlabel $1$ at 25 -5
\pinlabel $i$ at 128 -5
\pinlabel $i\text{+}4$ at 245 -5.5
\pinlabel $n\text{-}2$ at 327 -5
\pinlabel $n\text{-}1$ at 352 -5
\endlabellist
\includegraphics[width=\textwidth]{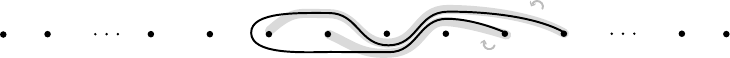}
\vspace{-5pt}
\caption*{Half-twist $w_{i,i+4}$ thrice about the Type 3 arcs shown.}
\bigskip

\bigskip
\labellist
\footnotesize
\pinlabel $0$ at 2 -5
\pinlabel $1$ at 25 -5
\pinlabel $i$ at 128 -5
\pinlabel $i\text{+}1$ at 159 -5.5
\pinlabel $n\text{-}2$ at 327 -5
\pinlabel $n\text{-}1$ at 352 -5
\endlabellist
\includegraphics[width=\textwidth]{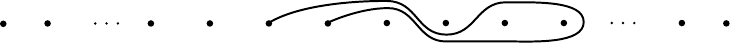}
\vspace{-5pt}
\caption*{Perform isotopy around $S^2$.}

\bigskip
\labellist
\footnotesize
\pinlabel $0$ at 2 -5
\pinlabel $1$ at 25 -5
\pinlabel $i$ at 128 -5
\pinlabel $i\text{+}1$ at 159 -6
\pinlabel $n\text{-}2$ at 327 -5.5
\pinlabel $n\text{-}1$ at 352 -5
\endlabellist
\includegraphics[width=\textwidth]{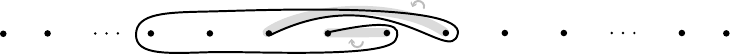}
\vspace{-5pt}
\caption*{Apply ${z_{i,i+3}}^3$ and $x_{i+1}^{-3}$.}

\bigskip
\labellist
\footnotesize
\pinlabel $0$ at 2 -5
\pinlabel $1$ at 25 -5
\pinlabel $i\text{-}2$ at 69 -5.5
\pinlabel $i\text{+}2$ at 189 -5.5
\pinlabel $n\text{-}2$ at 327 -5
\pinlabel $n\text{-}1$ at 352 -5
\endlabellist
\includegraphics[width=\textwidth]{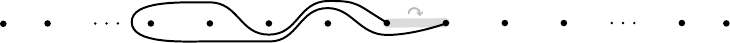}
\vspace{-5pt}
\caption*{Apply ${x_{i+2}}^{-1}$.}

\bigskip
\labellist
\footnotesize
\pinlabel $0$ at 2 -5
\pinlabel $1$ at 25 -5
\pinlabel $i\text{-}2$ at 69 -5.5
\pinlabel $i\text{+}2$ at 189 -5
\pinlabel $n\text{-}2$ at 327 -5
\pinlabel $n\text{-}1$ at 352 -5
\endlabellist
\includegraphics[width=\textwidth]{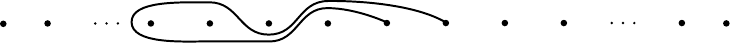}
\vspace{-5pt}
\caption*{The arc $w_{i-2,i+2}$.}
\medskip
\caption{Reducing $w_{i,i+4}$ to $w_{i-2,i+2}$.}
\end{subfigure}
\hfill
\begin{subfigure}[b]{0.413\textwidth}
\labellist
\footnotesize
\pinlabel $0$ at 8 -5
\pinlabel $4$ at 123 -5
\pinlabel $n\text{-}2$ at 256 -5
\pinlabel $n\text{-}1$ at 281 -5
\endlabellist
\includegraphics[width=\textwidth]{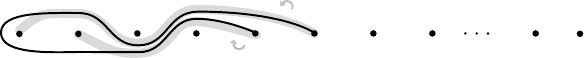}
\vspace{-5pt}
\caption*{Half-twist $w_{0,4}$ thrice about the Type 3 arcs shown.}

\bigskip
\labellist
\footnotesize
\pinlabel $0$ at 4 -5
\pinlabel $1$ at 32 -5
\pinlabel $n\text{-}2$ at 250 -5
\pinlabel $n\text{-}1$ at 275 -5
\endlabellist
\includegraphics[width=\textwidth]{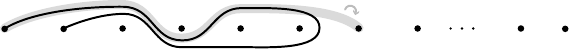}
\vspace{-5pt}
\caption*{Use moves N as in Figure \ref{fig:w-iso-2b}.}

\bigskip
\labellist
\footnotesize
\pinlabel $0$ at 4 -5
\pinlabel $1$ at 32 -5
\pinlabel $n\text{-}2$ at 250 -5.5
\pinlabel $n\text{-}1$ at 275 -5.5
\endlabellist
\includegraphics[width=\textwidth]{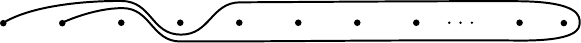}
\vspace{-5pt}
\caption*{Perform isotopy around $S^2$.}

\medskip
\labellist
\footnotesize
\pinlabel $0$ at 10 -5
\pinlabel $1$ at 38 -5
\pinlabel $n\text{-}2$ at 256 -5
\pinlabel $n\text{-}1$ at 281 -5
\endlabellist
\includegraphics[width=\textwidth]{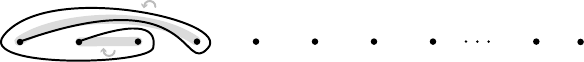}
\vspace{-5pt}
\caption*{Apply ${z_{0,3}}^{3}$ and ${x_1}^{-3}$.}

\bigskip
\labellist
\footnotesize
\pinlabel $0$ at 2 -13
\pinlabel $1$ at 30 -13
\pinlabel $2$ at 59 -13
\pinlabel $3$ at 87 -13
\pinlabel $n\text{-}2$ at 249 -13
\pinlabel $n\text{-}1$ at 273 -13
\endlabellist
\includegraphics[width=\textwidth]{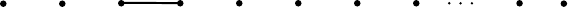}
\smallskip
\caption*{The arc $x_2$.}
\medskip
\caption{Reducing $w_{0,4}$ to $x_2$.}
\end{subfigure}

\caption{Obtaining $w_{i,i+4}$ from $x_j$ for $j$ even, C, and N. All arcs are standardly colored.}
\label{fig:w-iso-4}
\end{figure}

\begin{example}[$d=5$]
The generating set is $$\{x_0, {x_1}^3, x_2, {x_3}^3, x_4, {x_5}^3, x_6, \ldots, x_{n-2}, {y_{1,4}}^2, {y_{3,6}}^2, {y_{1,6}}^2, w_{0,2}, w_{2,4}, w_{4,6}, w_{0,4}, w_{2,6}, w_{0,6}, s_1\}$$
\end{example}

In degree $d \geq 5$, there are $n-1 + \frac{(d-3)(d-2)}{2} + \frac{(d-5)(d-4)}{2} + \frac{(d-2)(d-1)}{2} + 1 = \frac{3d^2-17d+28}{2}+n$ generators.

\subsubsection{The lifting homomorphism}\label{pf:lift}

Now we can compute the lifting homomorphism. Let $a_i, b_i, \delta$ denote the generators of $\MCG(\Sigma_g)$ as per \cite{waj83}, where our $\delta$ is $d$ in the notation of \cite{waj83}, and $x_i$ the generators of $B_n$. Then

\begin{theorem}
The lifting homomorphism is given by
$$ L(\beta) = 
\begin{cases}
id & \beta = x_{2i},\ i<d-2,$ even$
\\ id & \beta = {x_{2i+1}}^3,\ i<d-2,$ odd$
\\ a_{2(i-d+2)} & \beta = x_{2i},\ i\geq d-2
\\ b_{2(i-d+2)+1} & \beta = x_{2i+1},\ i\geq d-2
\\ \delta & \beta = s_1
\end{cases}
$$
\end{theorem}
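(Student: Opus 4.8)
The plan is to compute each value of $L$ by a local Birman--Hilden-type analysis on an explicit model of the covering surface, following the template used by Piergallini \cite{pie91} in degree $3$ and Apostolakis \cite{apo03} in degree $4$. First I would fix a concrete ``sheets glued along slits'' model of the $d$-fold branched cover $p\colon\Sigma_g\to S^2$ realizing the standard coloring, and split the base as $S^2 = B_L\cup B_R$, where $B_L$ is a disk containing the branch points $A_0,\dots,A_{2d-5}$ (colored $(d{-}1\ d)^2,(d{-}2\ d{-}1)^2,\dots,(2\ 3)^2$) and $B_R$ is a disk containing $A_{2d-4},\dots,A_{n-1}$ (all colored $(1\ 2)$). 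A Riemann--Hurwitz count shows that $p^{-1}(B_L)$ is planar---a trivial disk over sheet $1$ together with a connected genus-$0$ surface over sheets $2,\dots,d$---while $p^{-1}(B_R)$ is a genus-$g$ surface over sheets $1,2$ together with $d-2$ trivial disks over sheets $3,\dots,d$; in particular all of the genus of $\Sigma_g$ lives over $B_R$. I would simultaneously pin down Wajnryb's curves of \cite{waj83} on this model: the chain curves carrying the $a_i,b_i$ are exactly the preimages $\widetilde{x_j}:=p^{-1}(x_j)$ of the large-index arcs $x_j$ ($j\geq 2d-4$) inside the genus-$g$ piece, and $\delta$ is the curve carried by the special arc $s_1$ of Figure~\ref{fig:arcs}.

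With the model in hand, the five cases are as follows. For $\beta=x_j$ with $j\geq 2d-4$ (endpoints colored $(1\ 2)$), the classical hyperelliptic/Birman--Hilden computation gives $L(x_j)=T_{\widetilde{x_j}}$; matching the chain $\widetilde{x_{2d-4}},\widetilde{x_{2d-3}},\dots$ to Wajnryb's $a_0,b_1,a_2,b_3,\dots$ produces the two ``$i\geq d-2$'' cases with the stated indexing $a_{2(i-d+2)}$, $b_{2(i-d+2)+1}$. For $\beta=x_{2i}$ with $i<d-2$ (a Type~$1$ half-twist between two branch points of the same color $\tau$), the lift is supported in $p^{-1}(N(x_{2i}))$, which is an annulus $A$ over the two sheets moved by $\tau$ together with $d-2$ disks; hence $L(x_{2i})$ is a power of $T_{\widetilde{x_{2i}}}$, with $\widetilde{x_{2i}}$ the core of $A$, and the power is pinned down to $\pm 1$ by $L(x_{2i})^2 = L(x_{2i}^{\,2}) = L(T_{\partial N(x_{2i})}) = T_{\widetilde{x_{2i}}}^{\,2}$. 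It then remains to check that $\widetilde{x_{2i}}$ is null-homotopic in $\Sigma_g$: tracing it through the cover shows it is boundary-parallel in the planar piece $p^{-1}(B_L)$ and is cut off by a trivial ``leaf'' sheet once the closed surface is assembled, so $L(x_{2i})=\id$. For $\beta=x_{2i+1}^{\,3}$ with $i<d-2$ (a Type~$3$ arc whose endpoints share one symbol), the monodromy around $\partial N(x_{2i+1})$ is a $3$-cycle on three sheets and a Riemann--Hurwitz count shows $p^{-1}(N(x_{2i+1}))$ is a disjoint union of $d-2$ disks; since $x_{2i+1}^{\,3}$ is liftable and supported in $N(x_{2i+1})$, its lift is supported in a union of disks and is therefore trivial. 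Finally, for $\beta=s_1$ I would trace the special arc through the model and identify $p^{-1}(s_1)$ with the curve defining Wajnryb's generator $\delta$ (the element called $d$ in \cite{waj83}), exactly as the analogous elements are handled in \cite{pie91,apo03}. The remaining generators $y_{i,j}^{\,2}, z_{i,j}^{\,2}, w_{i,j}$ of Proposition~\ref{thm:gens} need no separate treatment: their images are forced by the homomorphism property and the relation $t_{t_\gamma(\delta)}=t_\gamma^{-1}t_\delta t_\gamma$, since they are explicit conjugates and products of powers of half-twists about arcs whose lifts have just been computed.

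The Birman--Hilden inputs themselves---that a liftable half-twist about an arc between equally-colored branch points lifts to a Dehn twist about the doubled arc, that full twists lift to products of Dehn twists about preimage curves, and that a lift supported over a disjoint union of disks is trivial---are standard, and the reductions above are elementary Euler characteristic bookkeeping. I expect the genuine obstacle to be matching everything on the nose with Wajnryb's specific generating set in \cite{waj83}: getting the chain of curves $\widetilde{x_j}$ ($j\geq 2d-4$) to coincide with $a_0,b_1,a_2,\dots$ with the correct indices and orientation conventions, confirming the identification $L(s_1)=\delta$, and carefully verifying that $\widetilde{x_{2i}}$ bounds a disk in $\Sigma_g$ for every $i<d-2$, including the edge case $i=d-3$ (where sheet $2$ also meets $B_R$, so the curve-tracing argument needs a small modification).
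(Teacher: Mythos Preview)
Your proposal is correct and follows essentially the same approach as the paper: build the cut-and-paste model of the cover and read off each $L(\beta)$ by lifting the relevant arc and identifying the (at most one) closed curve in its preimage. The paper packages this more tersely---stating once that a liftable half-twist lifts to a Dehn twist about the unique closed component of the lifted arc (and half-twists about the remaining arc components, which are trivial), then deferring the individual computations to the cut-and-paste figure---whereas your $B_L/B_R$ decomposition and Riemann--Hurwitz bookkeeping make the same verifications more explicit but are not needed as separate ingredients.
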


\begin{proof}
The image of the half-twist about an arc in $S^2$ is determined by the lift of that arc under the branched covering map. As the branched covering is simple, the lift of an arc will consist of the disjoint union of arcs and up to one closed curve in $\Sigma_g$. The half-twist then lifts to a Dehn twist about any closed components and a half-twist about any arcs. Dehn twists about nullhomotopic curves and any half-twist about an arc in $\Sigma_g$ are isotopic to the identity mapping class. We can directly compute the arc or curve to which an arc in $S^2$ lifts using a cut-and-paste technique, as shown in Figure \ref{fig:cutpaste}.
\end{proof}

\begin{figure}
\labellist
\small\hair 2pt
\pinlabel $a_0$ at 18 50
\pinlabel $a_1$ at 89 50
\pinlabel $a_2$ at 160 50
\pinlabel $a_{g-1}$ at 231 50
\pinlabel $a_g$ at 302 16
\pinlabel $\delta$ at 111 12
\pinlabel $b_1$ at 56 56
\pinlabel $b_2$ at 127 56
\pinlabel $b_g$ at 269 56
\endlabellist

\centering
\includegraphics[width=0.7\textwidth]{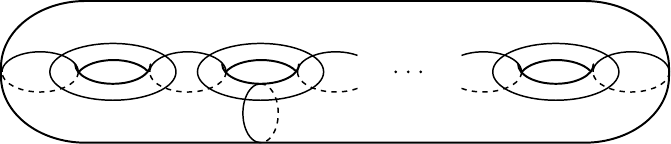}
\caption{Wajnryb's generating set for $\MCG(\Sigma_g)$ consists of Dehn twists about curves $\{a_0,\ldots,a_g,b_1,\ldots,b_g,\delta\}$.}
\label{fig:mcg gens}
\end{figure}

\bigskip\bigskip

\begin{figure}[t]
\labellist
\large\pinlabel $S^2$ at -45 25
\small\pinlabel $1$ at -45 140
\pinlabel $2$ at -45 212
\pinlabel $3$ at -45 284
\pinlabel $d-2$ at -45 356
\pinlabel $d-1$ at -45 428
\pinlabel $d$ at -45 500
\pinlabel {Sheet \#} at -45 545
\pinlabel $p$ at 250 82
\pinlabel \textcolor{nicered}{$w_{0,2}$} at 55 57
\pinlabel \textcolor{niceblue}{$s_1$} at 280 57
\pinlabel \textcolor{nicered}{$p^{-1}(w_{0,2})$} at 55 535
\pinlabel \textcolor{niceblue}{$p^{-1}(s_1)$} at 280 535
\endlabellist
\centering
\includegraphics[width=0.7\textwidth]{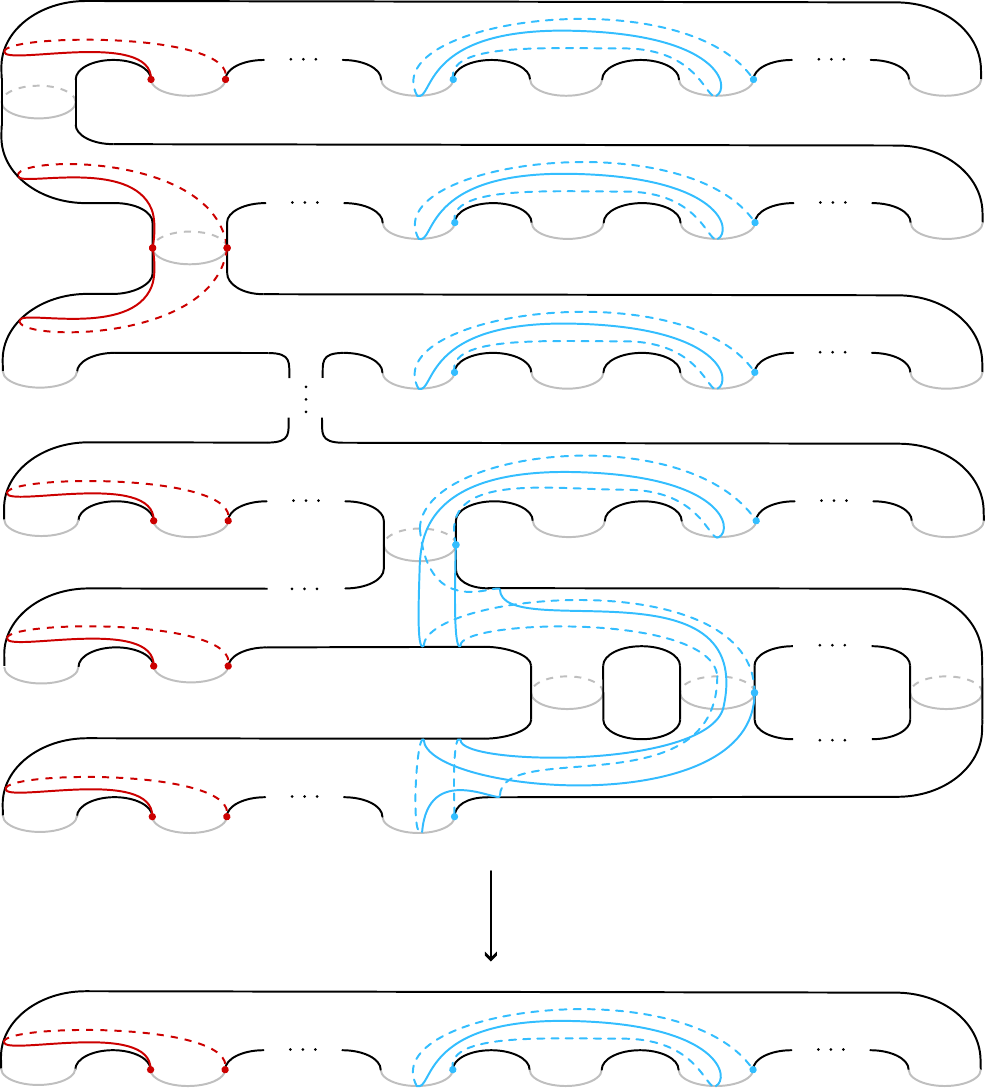}
\caption{A cut-and-paste diagram showing the construction of the standard degree $d$ cover of $S^2$ and the lifts of arcs $w_{0,2}$ and $s_1$. As both arcs are liftable, their lifts contain a unique closed loop. The loop in $p^{-1}(w_{0,2})$ is homotopically trivial, so $w_{0,2}$ is in the kernel of $L$. The loop in $p^{-1}(s_1)$ is isotopic to the loop $\delta$ in Figure \ref{fig:mcg gens}.}
\label{fig:cutpaste}
\end{figure}

\subsubsection{Equivalence of Heegaard splittings}\label{pf:gluing}

In order to turn equivalent Heegaard splittings into Heegaard splittings with the same gluing map, we must apply a homeomorphism to one of the handlebodies involved. As every such homeomorphism is determined by a homeomorphism of its boundary surface, we need only find some liftable braid that lifts to each element of $\MCG(\Sigma_g)$ that extends over a handlebody. Suzuki provides a generating set for this subgroup of $\MCG(\Sigma_g)$ in \cite{suz77}. Piergallini exhibits braids that lift to each generator in Section 2 of \cite{pie91}, which require only move C and isotopy to add to the trivial braid. By stabilizing these braids on the left with trivial strands colored as per the standard coloring, we obtain the desired braids for higher degree covers.

\subsubsection{Normal generators for $\ker L$}\label{pf:kernel}

$\ker L$ is normally generated by the generators in Theorem \ref{thm:gens} that are sent to the identity, and the preimage of the relations in $\MCG(\Sigma_g)$. 

\begin{theorem}
A normal generating set for the kernel of the lifting homomorphism is the union of:
\begin{align*}
\{ x_i & \mid i < 2d-4 \text{ and even} \}
\\ \{ {x_i}^3 & \mid i\text{ odd and} < 2d-4 \}
\\ \{ {y_{i,j}}^2 & \mid i\text{ odd}, j-i\text{ odd and }\geq 3, j \leq 2d-4 \}
\\ \{ {z_{i,j}}^2 & \mid i\text{ odd}, j-i\text{ odd and }\geq 5, j \leq 2d-4 \}
\\ \{ w_{i,j} & \mid i,j\text{ even and }j \leq 2d-4 \}
\\ \{ B, &\ d_{n-3} {x_{n-2}}^{-1} \}\ \text{ (in the notation of Theorem 6.1 of \cite{bw85})}
\end{align*}

In terms of the Artin generators, $B = (x_{2d-4}x_{2d-3}x_{2d-2})^4[u^{-1}]y^{-1}u^{-1}$, where
\begin{align*}
y & = x_{2d-1} \cdots x_{2d-3}{x_{2d-4}}^2x_{2d-3} \cdots x_{2d-1},
\\ u & = [x_{2d-2}](x_{2d-3}x_{2d-4}{x_{2d-5}}^2x_{2d-4}{x_{2d-3}}^2x_{2d-4}x_{2d-5})\text{, and}
\\ d_{n-3} & = [x_{n-4}]x_{n-3}x_{n-4} \cdots x_{2d-4}{x_{2d-5}}^2 x_{2d-4} \cdots x_{n-4}{x_{n-3}}^2 x_{n-4} \cdots x_{2d-4}x_{2d-5}
\end{align*}
\end{theorem}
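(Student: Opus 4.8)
The plan is to identify $\ker L$ by combining two sources of kernel elements: the generators of $L^p(B_n)$ from Theorem \ref{thm:gens} that the lifting homomorphism sends to the identity, and the relators of $\MCG(\Sigma_g)$ pulled back along $L$. Since $L^p(B_n)$ surjects onto the subgroup of $\MCG(\Sigma_g)$ generated by $\{a_{2i}, b_{2i+1}, \delta\}$ (by the explicit computation in Theorem 3.7/the lifting homomorphism theorem), a standard presentation-theoretic argument applies: if a group $G$ is generated by a set $S$ and maps onto $H$ via $\phi$, with $\phi(S)$ generating $H$, then $\ker\phi$ is normally generated by $\{s \in S : \phi(s) = 1\}$ together with the $\phi$-preimages (chosen words in $S$) of a complete set of relators for $H$ in the generators $\phi(S)$. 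First I would invoke this to reduce the problem to: (i) reading off which generators in Theorem \ref{thm:gens} die, and (ii) pulling back a presentation of the relevant subgroup of $\MCG(\Sigma_g)$.

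For step (i), the lifting homomorphism theorem tells us $x_i \mapsto \mathrm{id}$ for $i < 2d-4$ even, ${x_i}^3 \mapsto \mathrm{id}$ for $i<2d-4$ odd, and (from the cut-and-paste analysis in Figure \ref{fig:cutpaste}) $w_{i,j} \mapsto \mathrm{id}$ for all the $w$-generators, since their lifts contain only nullhomotopic closed loops. The $z_{i,j}^2$ with $j - i = 5$ are not listed among the generators of $L^p(B_n)$ in Theorem \ref{thm:gens} (which required $j-i \geq 7$) because they were expressed via $y$ and $w$ generators; however, in $\ker L$ we cannot drop them, because that reduction used $z^2$-type relations which are only trivial modulo $N$-moves, not in $\mathrm{MCG}$ — so I would re-include $z_{i,j}^2$ for $j-i \geq 5$ here. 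Similarly the $y_{i,j}^2$ with $j-i\geq 3$ all lift nontrivially in $B_n$ but map to the identity in $\MCG(\Sigma_g)$ because, by the Type 2 / cut-and-paste picture, their lifts are unions of arcs only; I would verify this claim by the same cut-and-paste computation used for $w_{0,2}$. This accounts for all the listed families except the last.

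For step (ii), the surviving generators $x_{2i}$ ($i \geq d-2$), ${x_{2i+1}}^3 \to b_{2i+1}$ wait — $x_{2i+1} \to b$, not its cube — and $s_1 \to \delta$ generate the Suzuki-type / Wajnryb subgroup, and I would take the presentation of $\MCG(\Sigma_g)$ (or of the subgroup it generates) from \cite{bw85} Theorem 6.1, since Birman–Wajnryb already did exactly this pullback for the degree-3 case; the relators $B$ and $d_{n-3}{x_{n-2}}^{-1}$ are their relators transcribed into our indexing, shifted by the $2d-4$ standardly-colored strands on the left. I would spell out $B$, $u$, $y$, $d_{n-3}$ as Artin words by directly substituting Wajnryb's lantern/chain relators into the identification $a_{2(i-d+2)} = x_{2i}$, $b_{2(i-d+2)+1} = x_{2i+1}$, $\delta = s_1$, checking that the left-stabilization by $2d-4$ trivial strands does not alter the relator (it only relabels indices). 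The main obstacle I expect is step (ii): making sure the pullback of the $\MCG$ relators is \emph{complete} — i.e., that Wajnryb's presentation really does capture all relations, and that no new relator is introduced by the fact that $L^p(B_n)$ is a proper subgroup of $B_n$ with its own relations among the chosen generators. I would handle this by noting that any relation in $B_n$ among liftable generators maps to a relation in $\MCG(\Sigma_g)$, hence is a product of conjugates of the pulled-back $\MCG$-relators times elements already in the kernel, so no genuinely new normal generator arises; the argument is closed by the surjectivity of $L$ onto the explicitly presented target.
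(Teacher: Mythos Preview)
Your overall strategy---normal generators of $\ker L$ come from (i) generators of $L^p(B_n)$ mapping to the identity and (ii) lifts of the Wajnryb relators in $\MCG(\Sigma_g)$, pulled back via the Birman--Wajnryb computation shifted by $2d-4$ strands---is exactly what the paper does. The paper's own proof is two sentences: one for (i) and one naming the $3$-chain and lantern relations as the source of $B$ and $d_{n-3}x_{n-2}^{-1}$.

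There is one genuine error in your reasoning. Your explanation for why $z_{i,j}^2$ with $j-i=5$ must be ``re-included'' is wrong: the reduction of $z_{i,i+3}$ and $z_{i,i+5}$ in Theorem~\ref{thm:gens} does \emph{not} use $N$-moves. It expresses these arcs as images of $y_{i,i+4}$ and $y_{i-1,i+6}$ under conjugation by the liftable element $w_{i+1,i+3}$ (and $x_{i+3}$); see Figure~\ref{fig:z from w}. Hence $z_{i,i+5}^2$ is a conjugate in $L^p(B_n)$ of some $y^2$ already on the list, and is therefore redundant as a \emph{normal} generator. The $\geq 5$ in the statement is a harmless redundancy, not something forced. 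You have confused this with the separate discussion in the degree-$4$ example, where $w$-arcs are eliminated in the quotient by $C$ and $N$; that argument is irrelevant here.

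A smaller point: your justification that $y_{i,j}^2$, $z_{i,j}^2$, $w_{i,j}$ lie in $\ker L$ via case-by-case lift analysis is more work than needed. The paper's argument is that the portion of the cover lying over the disk containing strands $0$ through $2d-5$ is planar (sheet $1$ is disjoint, and the remaining $d-1$ sheets form a genus-$0$ surface by Riemann--Hurwitz), so any simple closed curve supported there is nullhomotopic and the corresponding Dehn twist is trivial. This handles most of the list uniformly; the remaining generators touching strand $2d-4$ or $2d-3$ are checked by the cut-and-paste picture as in Figure~\ref{fig:cutpaste}. Finally, your mid-sentence hesitation about $x_{2i+1}$ versus $x_{2i+1}^3$ is resolved by noting that for $i \geq d-2$ the arc $x_{2i+1}$ is already Type~1 and lifts to $b_{2(i-d+2)+1}$; cubes only appear for $i < d-2$.
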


Liftable braids involving only strands 0 through $2d-5$ will be in the kernel as they lift to twists around homotopically trivial curves (see Figure \ref{fig:cutpaste}). The generators $B$ and $d_{n-3}{x_{n-2}}^{-1}$ come from the 3-chain relation and lantern relation respectively. In the notation of \cite{pie91}, $d_{n-3}{x_{n-2}}^{-1}$ is $y_{g}^{-1} {x_{2g+2}}^{-1}y_g x_{2g+2}$.

\subsubsection{Adding normal generators for $\ker L$ to the trivial braid using moves}\label{pf:moves}

We first confirm that the moves of Theorem \ref{thm:moves} are covering moves by showing that the braid appearing on the right-hand side lies in $\ker L$, and so lifts to the identity mapping class. 
By the same arguments as in \cite{pie91}, moves II$_i$ and IV are in $\ker L$.
Move III$_{i,j}$ is in $\ker L$ as it can be obtained from $w_{i,j}$ using moves C and II.

Next, we show each braid in $\ker L$ can be obtained from the trivial braid by some sequence of covering moves. Move II$_i$ and C can be used to realize $x_{2i}$ and $x_{2i+1}$ respectively. Move N can be used to realize ${y_{i,j}}^2$ and ${z_{i,j}}^2$ as in Figure \ref{fig:yz-moves}. Figure \ref{fig:w-moves} shows how to realize $w_{i,j}$. Lastly, $B$ and $d_{n-3}{x_{n-2}}^{-1}$ can be realized as in Figures 19-21 of \cite{pie91}, as only strands colored $(1\ 2), (2\ 3),$ or $(1\ 3)$ are involved. The procedure in Figure 20 of \cite{pie91} requires additional N moves between stages (c) and (d) for higher-degree covers.

\begin{figure}
\centering

\begin{subfigure}[b]{0.3\textwidth}
\labellist
\small\pinlabel $i$ at 28 -8
\pinlabel $i\text{+}1$ at 42 -8
\pinlabel $\cdots$ at 65 -8
\pinlabel $j\text{-}1$ at 85 -8
\pinlabel $j$ at 98 -8
\endlabellist
\includegraphics[width=\textwidth]{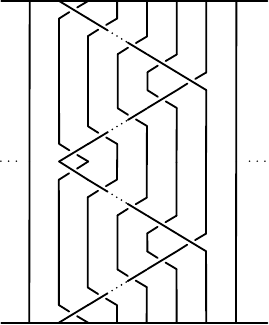}
\medskip
\caption{The braid ${y_{i,j}}^2$. The strands are colored as per the standard coloring in Fig. \ref{fig:std}.}
\end{subfigure}
\hfill
\begin{subfigure}[b]{0.3\textwidth}
\labellist
\small\pinlabel $i$ at 28 -8
\pinlabel $i\text{+}1$ at 42 -8
\pinlabel $\cdots$ at 65 -8
\pinlabel $j\text{-}1$ at 85 -8
\pinlabel $j$ at 98 -8
\endlabellist
\includegraphics[width=\textwidth]{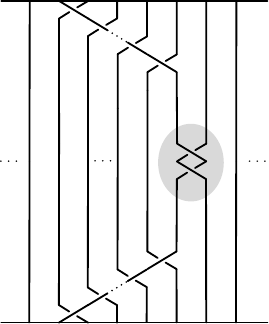}
\medskip
\caption{Isotopy from (a).}
\bigskip\bigskip
\end{subfigure}
\hfill
\begin{subfigure}[b]{0.3\textwidth}
\labellist
\small\pinlabel $i$ at 28 -8
\pinlabel $i\text{+}1$ at 42 -8
\pinlabel $\cdots$ at 65 -8
\pinlabel $j\text{-}1$ at 85 -8
\pinlabel $j$ at 98 -8
\endlabellist
\includegraphics[width=\textwidth]{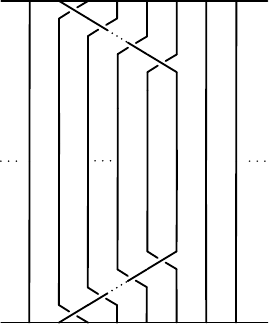}
\medskip
\caption{Move N in the shaded region of (b). This braid is isotopic to the trivial braid.}
\end{subfigure}

\caption{Undoing ${y_{i,j}}^2$ using N. The procedure for ${z_{i,j}}^2$ is the same.}
\label{fig:yz-moves}
\end{figure}

\bigskip\bigskip

\begin{figure}
\centering

\begin{subfigure}[b]{0.23\textwidth}
\labellist
\small\pinlabel $i$ at 14 -7.5
\pinlabel $i\text{+}1$ at 30 -8
\pinlabel $\cdots$ at 57 -8
\pinlabel $j$ at 80 -8
\pinlabel $j\text{+}1$ at 96 -8
\endlabellist
\includegraphics[width=\textwidth]{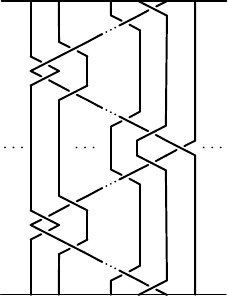}
\medskip
\caption{The braid $w_{i,j}$.}
\bigskip
\end{subfigure}
\hfill
\begin{subfigure}[b]{0.23\textwidth}
\labellist
\small\pinlabel $i$ at 14 -7.5
\pinlabel $i\text{+}1$ at 30 -8
\pinlabel $\cdots$ at 57 -8
\pinlabel $j$ at 80 -8
\pinlabel $j\text{+}1$ at 96 -8
\endlabellist
\includegraphics[width=\textwidth]{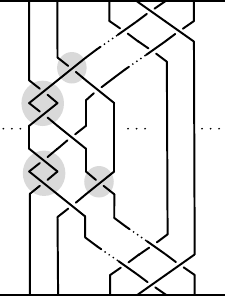}
\medskip
\caption{Isotopy from (a).}
\bigskip
\end{subfigure}
\hfill
\begin{subfigure}[b]{0.23\textwidth}
\labellist
\small\pinlabel $i$ at 14 -7.5
\pinlabel $i\text{+}1$ at 30 -8
\pinlabel $\cdots$ at 57 -8
\pinlabel $j$ at 80 -8
\pinlabel $j\text{+}1$ at 96 -8
\endlabellist
\includegraphics[width=\textwidth]{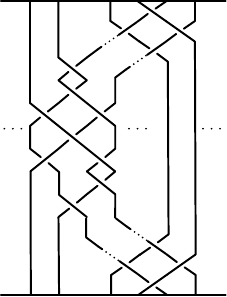}
\medskip
\caption{Move C in each shaded region in (b).}
\end{subfigure}
\hfill
\begin{subfigure}[b]{0.23\textwidth}
\labellist
\small\pinlabel $i$ at 14 -7.5
\pinlabel $i\text{+}1$ at 30 -8
\pinlabel $\cdots$ at 57 -8
\pinlabel $j$ at 80 -8
\pinlabel $j\text{+}1$ at 96 -8
\endlabellist
\includegraphics[width=\textwidth]{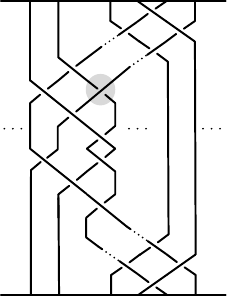}
\medskip
\caption{Isotopy from (c).}
\bigskip
\end{subfigure}
\end{figure}

\begin{figure}\ContinuedFloat
\begin{subfigure}[b]{0.23\textwidth}
\labellist
\small\pinlabel $i$ at 14 -7.5
\pinlabel $i\text{+}1$ at 30 -8
\pinlabel $\cdots$ at 57 -8
\pinlabel $j$ at 80 -8
\pinlabel $j\text{+}1$ at 96 -8
\endlabellist
\includegraphics[width=\textwidth]{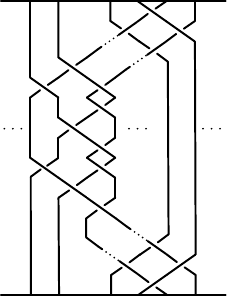}
\medskip
\caption{Move C in the shaded region of (d).}
\end{subfigure}
\hfill
\begin{subfigure}[b]{0.23\textwidth}
\labellist
\small\pinlabel $i$ at 14 -7.5
\pinlabel $i\text{+}1$ at 30 -8
\pinlabel $\cdots$ at 57 -8
\pinlabel $j$ at 80 -8
\pinlabel $j\text{+}1$ at 96 -8
\endlabellist
\includegraphics[width=\textwidth]{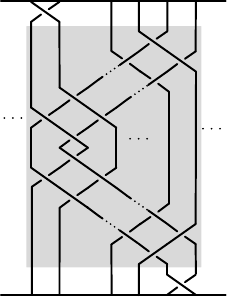}
\medskip
\caption{Isotopy from (e).}
\bigskip
\end{subfigure}
\hfill
\begin{subfigure}[b]{0.23\textwidth}
\labellist
\small\pinlabel $i$ at 14 -7.5
\pinlabel $i\text{+}1$ at 30 -8
\pinlabel $\cdots$ at 57 -8
\pinlabel $j$ at 80 -8
\pinlabel $j\text{+}1$ at 96 -8
\endlabellist
\includegraphics[width=\textwidth]{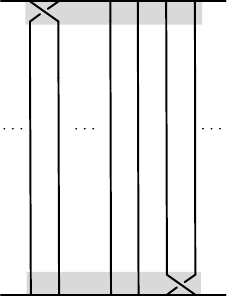}
\medskip
\caption{Move III$_{i,j}$ in the shaded region of (f).}
\end{subfigure}
\hfill
\begin{subfigure}[b]{0.23\textwidth}
\labellist
\small\pinlabel $i$ at 14 -7.5
\pinlabel $i\text{+}1$ at 30 -8
\pinlabel $\cdots$ at 57 -8
\pinlabel $j$ at 80 -8
\pinlabel $j\text{+}1$ at 96 -8
\endlabellist
\includegraphics[width=\textwidth]{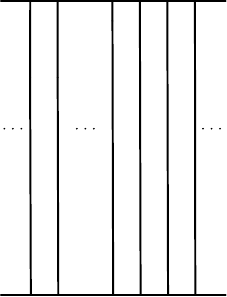}
\medskip
\caption{Moves II$_i$ and II$_j$ in the shaded regions of (g).}
\end{subfigure}

\caption{Undoing $w_{i,j}$ with C, II$_i$, II$_j$, and III$_{i,j}$. All strands are colored standardly.}
\label{fig:w-moves}
\end{figure}

\subsection{Proof of Theorem \ref{thm:cn}}\label{pf:cn}

In this section, we show that up to stabilization to the same degree at least 4, only C and N are needed to relate links with the same cover.

We first show the following lemma, which will also be needed in the proof of Corollary \ref{cor:lens}.

\begin{lemma} Up to C, N, and isotopy, each $L_i$ is the stabilization of the branch set of a 3-fold cover. \label{lem:stab}\end{lemma}
\begin{proof}
It suffices to show that up to C, N, and isotopy, no instances of the Artin generators $x_i$ for $i$ odd and less than $2d-6$ appear. 

First, stabilize colored links $L_1$ and $L_2$ to the same degree $d$ and write them as the plat closures of braids in standard form as in Section \ref{pf:standard}. These braids will each be some word in the generators of $L^p(B_n)$ computed in Section \ref{pf:gens}.

Instances of ${x_i}^3$ for $i$ odd and less than $2d-4$ can be removed using move C. Instances of ${y_{i,j}}^2$ and ${z_{i,j}}^2$ can be removed using move N. The last appearance of any $w_{i,j}$ in the word can be removed by link isotopy.

This process can be repeated until we are left with a word in only $x_0, x_2, \ldots, x_{2d-8}, x_{2d-6}, x_{2d-5}, \ldots, x_{n-2}$, the plat closure of which describes the same manifold as the original braid. This plat closure is the split union of $d-3$ unlinks colored $(d\text{-}1\ d)$ through $(3\ 4)$ and some 3-colored link, and so is a stabilization to degree $d$ of the branch set of a degree 3 cover.
\end{proof}

The proof of Theorem \ref{thm:cn} is completed by noting that by \cite{pie95}, the branch sets of any two 3-fold simple branched covers by the same 3-manifold can be related by only C and N after one stabilization.

\subsection{Proof of Theorem \ref{thm:sheets}}\label{pf:sheets}
In this section, we show that after $d-2$ stabilizations, each of the moves of Theorem \ref{thm:moves} can be carried out using only C and N.  Figure \ref{fig:useful} shows a useful move in the proof of this theorem.

\begin{figure}
\centering
\labellist
\large\pinlabel {$2\times$C} at 76 55
\pinlabel $\simeq$ at 174 55

\small\pinlabel $(i\ j)$ at 8 95
\pinlabel $(j\ k)$ at 36 95
\pinlabel $(i\ l)$ at 8 -8
\pinlabel $(l\ k)$ at 36 -8
\pinlabel $(j\ l)$ at -20 43

\pinlabel $(i\ j)$ at 108 95
\pinlabel $(j\ k)$ at 136 95
\pinlabel $(i\ l)$ at 108 -8
\pinlabel $(l\ k)$ at 136 -8

\pinlabel $(i\ j)$ at 208 95
\pinlabel $(j\ k)$ at 236 95
\pinlabel $(i\ l)$ at 208 -8
\pinlabel $(l\ k)$ at 236 -8
\endlabellist
\bigskip
\includegraphics[width=0.45\textwidth]{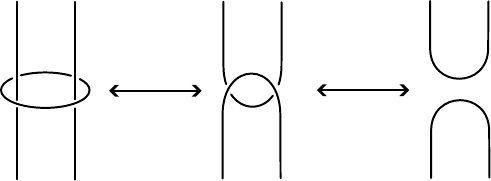}
\bigskip
\caption{An auxiliary move. The labels $i$ and $k$ may coincide.}
\label{fig:useful}
\end{figure}

\subsubsection{Move II}

II$_0$ can be obtained starting from the trivial braid using one stabilizing unknot colored $(d\ d\text{+}1)$, as in \cite{pie95}. This is shown in Figure \ref{fig:sheet ii0}. Undoing each II$_i$ for $i < d\text{-}3$ requires an additional stabilization colored $(d\text{-}j\ d\text{+}1\text{+}j)$ for each $j < i$ and is shown in Figure \ref{fig:sheet iii}. This can be thought of as a local version of destabilization, as the strands colored $(d\text{-}j\ d\text{-}j\text{-}1)$ can be isotoped away from the rest of the diagram. II$_{d-3}$ can be undone without an additional stabilization, as in \cite{apo03}, and shown in Figure \ref{fig:sheet iid}. In total, for $d \geq 4$, $d-3$ extra sheets are used.

\begin{figure}
\centering
\begin{subfigure}[b]{0.384\textwidth}
\labellist
\small\pinlabel $(d\ d\text{+}1)$ at 165 35
\pinlabel $0$ at 5 -10
\pinlabel $1$ at 20 -10
\pinlabel $2$ at 35 -10
\pinlabel $3$ at 50 -10
\pinlabel $\cdots$ at 85 -10
\pinlabel $n\text{-}2$ at 118 -10
\pinlabel $n\text{-}1$ at 135 -10
\endlabellist
\includegraphics[width=\textwidth]{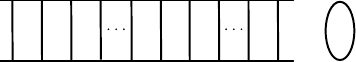}
\medskip
\caption{The trivial braid, stabilized once.}
\end{subfigure}
\quad\qquad
\begin{subfigure}[b]{0.32\textwidth}
\labellist
\small\pinlabel $(d\ d\text{+}1)$ at 162 15
\pinlabel $0$ at 5 -10
\pinlabel $1$ at 20 -10
\pinlabel $2$ at 35 -10
\pinlabel $3$ at 50 -10
\pinlabel $\cdots$ at 85 -10
\pinlabel $n\text{-}2$ at 118 -10
\pinlabel $n\text{-}1$ at 135 -10
\endlabellist
\includegraphics[width=\textwidth]{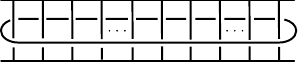}
\medskip
\caption{Isotopy from (a).}
\end{subfigure}

\bigskip

\begin{subfigure}[b]{0.32\textwidth}
\labellist
\small\pinlabel $(d\ d\text{+}1)$ at -20 15
\pinlabel $0$ at 5 -10
\pinlabel $1$ at 20 -10
\pinlabel $2$ at 35 -10
\pinlabel $3$ at 50 -10
\pinlabel $\cdots$ at 85 -10
\pinlabel $n\text{-}2$ at 118 -10
\pinlabel $n\text{-}1$ at 135 -10
\endlabellist
\includegraphics[width=\textwidth]{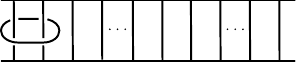}
\medskip
\caption{Moves N.}
\end{subfigure}
\hfill
\begin{subfigure}[b]{0.32\textwidth}
\labellist
\small\pinlabel $0$ at 5 -10
\pinlabel $1$ at 20 -10
\pinlabel $2$ at 35 -10
\pinlabel $3$ at 50 -10
\pinlabel $\cdots$ at 85 -10
\pinlabel $n\text{-}2$ at 118 -10
\pinlabel $n\text{-}1$ at 135 -10
\endlabellist
\includegraphics[width=\textwidth]{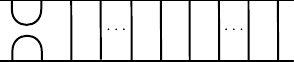}
\medskip
\caption{Apply Figure \ref{fig:useful} to (c).}
\end{subfigure}
\hfill
\begin{subfigure}[b]{0.32\textwidth}
\labellist
\small\pinlabel $0$ at 5 -10
\pinlabel $1$ at 20 -10
\pinlabel $2$ at 35 -10
\pinlabel $3$ at 50 -10
\pinlabel $\cdots$ at 85 -10
\pinlabel $n\text{-}2$ at 118 -10
\pinlabel $n\text{-}1$ at 135 -10
\endlabellist
\includegraphics[width=\textwidth]{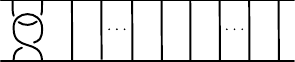}
\medskip
\caption{Isotopy from (d).}
\end{subfigure}
\end{figure}

\begin{figure}\ContinuedFloat
\centering
\begin{subfigure}[b]{0.32\textwidth}
\labellist
\small\pinlabel $(d\ d\text{+}1)$ at -20 20
\pinlabel $0$ at 5 -10
\pinlabel $1$ at 20 -10
\pinlabel $2$ at 35 -10
\pinlabel $3$ at 50 -10
\pinlabel $\cdots$ at 85 -10
\pinlabel $n\text{-}2$ at 118 -10
\pinlabel $n\text{-}1$ at 135 -10
\endlabellist
\includegraphics[width=\textwidth]{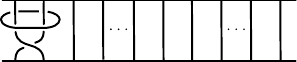}
\medskip
\caption{Apply Figure \ref{fig:useful} to (e).}
\end{subfigure}
\quad\qquad
\begin{subfigure}[b]{0.384\textwidth}
\labellist
\small\pinlabel $(d\ d\text{+}1)$ at 165 35
\pinlabel $0$ at 5 -10
\pinlabel $1$ at 20 -10
\pinlabel $2$ at 35 -10
\pinlabel $3$ at 50 -10
\pinlabel $\cdots$ at 85 -10
\pinlabel $n\text{-}2$ at 118 -10
\pinlabel $n\text{-}1$ at 135 -10
\endlabellist
\includegraphics[width=\textwidth]{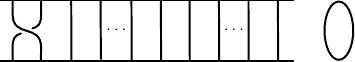}
\medskip
\caption{Perform (a)-(c) in reverse.}
\end{subfigure}

\caption{Obtaining II$_0$ using C and N. All strands are standardly colored.}
\label{fig:sheet ii0}
\end{figure}
\bigskip
\begin{figure}
\centering
\begin{subfigure}[b]{0.32\textwidth}
\labellist\small
\pinlabel $0$ at 3 -10
\pinlabel $1$ at 17 -10
\pinlabel $\cdots$ at 52 -10
\pinlabel $2i$ at 87 -10
\pinlabel $2i\text{+}1$ at 105 -10.5
\endlabellist
\includegraphics[width=\textwidth]{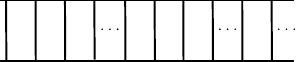}
\medskip
\caption{The trivial braid.}
\bigskip\bigskip
\end{subfigure}
\hfill
\begin{subfigure}[b]{0.32\textwidth}
\labellist\small
\pinlabel $0$ at 3 -10
\pinlabel $1$ at 17 -10
\pinlabel $\cdots$ at 52 -10
\pinlabel $2i$ at 87 -10
\pinlabel $2i\text{+}1$ at 105 -10.5
\endlabellist
\includegraphics[width=\textwidth]{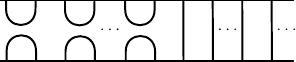}
\medskip
\caption{Use $i$ stabilizing unknots to open the strands as in Figure \ref{fig:sheet ii0} (a)-(d).}
\bigskip
\end{subfigure}
\hfill
\begin{subfigure}[b]{0.32\textwidth}
\labellist\small
\pinlabel $0$ at 3 -10
\pinlabel $1$ at 17 -10
\pinlabel $\cdots$ at 52 -10
\pinlabel $2i$ at 87 -10
\pinlabel $2i\text{+}1$ at 105 -10.5
\endlabellist
\includegraphics[width=\textwidth]{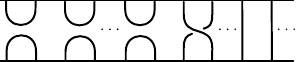}
\medskip
\caption{Use an $(i\text{+}1)$-th stabilizing unknot to apply moves as in Figure \ref{fig:sheet ii0}. Perform (a)-(b) in reverse.}
\end{subfigure}

\caption{Obtaining II$_i$ using C and N. All strands are standardly colored.}
\label{fig:sheet iii}
\end{figure}
\bigskip

\begin{figure}
\centering

\begin{subfigure}[b]{0.32\textwidth}
\labellist\small
\pinlabel $0$ at 5 -10
\pinlabel $1$ at 20 -10
\pinlabel $\cdots$ at 47 -10
\pinlabel $2d\text{-}5$ at 72 -10
\pinlabel $2d\text{-}4$ at 90 -10
\pinlabel $2d\text{-}3$ at 108 -10
\pinlabel $n\text{-}1$ at 135 -10
\endlabellist
\includegraphics[width=\textwidth]{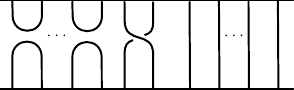}
\medskip
\caption{Setup as in Figure \ref{fig:sheet iii}.}
\end{subfigure}
\hfill
\begin{subfigure}[b]{0.32\textwidth}
\labellist\small
\pinlabel $0$ at 5 -10
\pinlabel $1$ at 20 -10
\pinlabel $\cdots$ at 47 -10
\pinlabel $2d\text{-}5$ at 72 -10
\pinlabel $2d\text{-}4$ at 90 -10
\pinlabel $2d\text{-}3$ at 108 -10
\pinlabel $n\text{-}1$ at 135 -10
\endlabellist
\includegraphics[width=\textwidth]{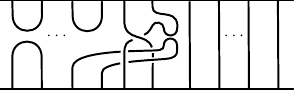}
\medskip
\caption{Isotopy from (a).}
\end{subfigure}
\hfill
\begin{subfigure}[b]{0.32\textwidth}
\labellist\small
\pinlabel $0$ at 5 -10
\pinlabel $1$ at 20 -10
\pinlabel $\cdots$ at 47 -10
\pinlabel $2d\text{-}5$ at 72 -10
\pinlabel $2d\text{-}4$ at 90 -10
\pinlabel $2d\text{-}3$ at 108 -10
\pinlabel $n\text{-}1$ at 135 -10
\endlabellist
\includegraphics[width=\textwidth]{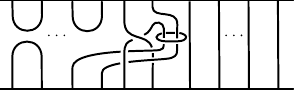}
\medskip
\caption{Apply Figure \ref{fig:useful}.}
\end{subfigure}

\bigskip

\begin{subfigure}[b]{0.32\textwidth}
\labellist\small
\pinlabel $0$ at 5 -10
\pinlabel $1$ at 20 -10
\pinlabel $\cdots$ at 47 -10
\pinlabel $2d\text{-}5$ at 72 -10
\pinlabel $2d\text{-}4$ at 90 -10
\pinlabel $2d\text{-}3$ at 108 -10
\pinlabel $n\text{-}1$ at 135 -10
\endlabellist
\includegraphics[width=\textwidth]{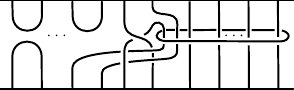}
\medskip
\caption{Moves N.}
\end{subfigure}
\hfill
\begin{subfigure}[b]{0.32\textwidth}
\labellist\small
\pinlabel $0$ at 5 -10
\pinlabel $1$ at 20 -10
\pinlabel $\cdots$ at 47 -10
\pinlabel $2d\text{-}5$ at 72 -10
\pinlabel $2d\text{-}4$ at 90 -10
\pinlabel $2d\text{-}3$ at 108 -10
\pinlabel $n\text{-}1$ at 135 -10
\endlabellist
\includegraphics[width=\textwidth]{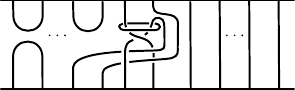}
\medskip
\caption{Isotopy from (d).}
\end{subfigure}
\hfill
\begin{subfigure}[b]{0.32\textwidth}
\labellist\small
\pinlabel $0$ at 5 -10
\pinlabel $1$ at 20 -10
\pinlabel $\cdots$ at 47 -10
\pinlabel $2d\text{-}5$ at 72 -10
\pinlabel $2d\text{-}4$ at 90 -10
\pinlabel $2d\text{-}3$ at 108 -10
\pinlabel $n\text{-}1$ at 135 -10
\endlabellist
\includegraphics[width=\textwidth]{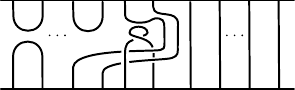}
\medskip
\caption{Apply Figure \ref{fig:useful}.}
\end{subfigure}
\end{figure}

\begin{figure}\ContinuedFloat
\centering
\begin{subfigure}[b]{0.32\textwidth}
\labellist\small
\pinlabel $0$ at 5 -10
\pinlabel $1$ at 20 -10
\pinlabel $\cdots$ at 47 -10
\pinlabel $2d\text{-}5$ at 72 -10
\pinlabel $2d\text{-}4$ at 90 -10
\pinlabel $2d\text{-}3$ at 108 -10
\pinlabel $n\text{-}1$ at 135 -10
\endlabellist
\includegraphics[width=\textwidth]{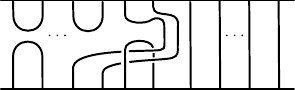}
\medskip
\caption{Isotopy from (f).}
\end{subfigure}
\hfill
\begin{subfigure}[b]{0.32\textwidth}
\labellist\small
\pinlabel $0$ at 5 -10
\pinlabel $1$ at 20 -10
\pinlabel $\cdots$ at 47 -10
\pinlabel $2d\text{-}5$ at 72 -10
\pinlabel $2d\text{-}4$ at 90 -10
\pinlabel $2d\text{-}3$ at 108 -10
\pinlabel $n\text{-}1$ at 135 -10
\endlabellist
\includegraphics[width=\textwidth]{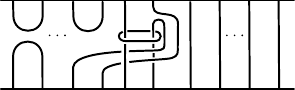}
\medskip
\caption{Apply Figure \ref{fig:useful}.}
\end{subfigure}
\hfill
\begin{subfigure}[b]{0.32\textwidth}
\labellist\small
\pinlabel $0$ at 5 -10
\pinlabel $1$ at 20 -10
\pinlabel $\cdots$ at 47 -10
\pinlabel $2d\text{-}5$ at 72 -10
\pinlabel $2d\text{-}4$ at 90 -10
\pinlabel $2d\text{-}3$ at 108 -10
\pinlabel $n\text{-}1$ at 135 -10
\endlabellist
\includegraphics[width=\textwidth]{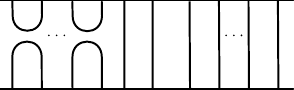}
\medskip
\caption{Perform (a)-(d) in reverse.}
\end{subfigure}

\caption{Obtaining II$_{d-3}$. All strands are standardly colored. }
\label{fig:sheet iid}
\end{figure}

\subsubsection{Move III}

III$_{i,j}$ can be carried out as in Figure \ref{fig:sheet 3}.

\begin{figure}
\centering

\begin{subfigure}[b]{0.23\textwidth}
\labellist\small
\pinlabel $i$ at 14 -9.5
\pinlabel $i\text{+}1$ at 31 -10
\pinlabel $\cdots$ at 65 -10
\pinlabel $j$ at 99 -10
\pinlabel $j\text{+}1$ at 116 -10
\endlabellist
\includegraphics[width=\textwidth]{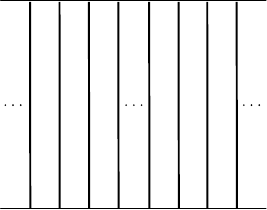}
\medskip
\caption{The trivial braid.}
\bigskip\bigskip\bigskip
\end{subfigure}
\hfill
\begin{subfigure}[b]{0.23\textwidth}
\labellist\small
\pinlabel $i$ at 14 -9.5
\pinlabel $i\text{+}1$ at 31 -10
\pinlabel $\cdots$ at 65 -10
\pinlabel $j$ at 99 -10
\pinlabel $j\text{+}1$ at 116 -10
\endlabellist
\includegraphics[width=\textwidth]{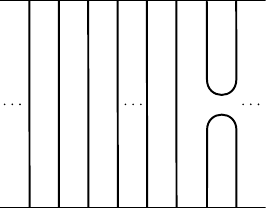}
\medskip
\caption{Use $j+1$ stabilizing unknots to open strands $j$ and $j\text{+}1$ as in Figure \ref{fig:sheet ii0}.}
\bigskip
\end{subfigure}
\hfill
\begin{subfigure}[b]{0.23\textwidth}
\labellist\small
\pinlabel $i$ at 14 -9.5
\pinlabel $i\text{+}1$ at 31 -10
\pinlabel $\cdots$ at 65 -10
\pinlabel $j$ at 99 -10
\pinlabel $j\text{+}1$ at 116 -10
\endlabellist
\includegraphics[width=\textwidth]{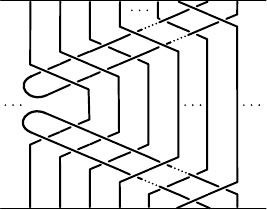}
\medskip
\caption{Isotopy from (c).}
\bigskip\bigskip\bigskip
\end{subfigure}
\hfill
\begin{subfigure}[b]{0.23\textwidth}
\labellist\small
\pinlabel $i$ at 14 -9.5
\pinlabel $i\text{+}1$ at 31 -10
\pinlabel $\cdots$ at 65 -10
\pinlabel $j$ at 99 -10
\pinlabel $j\text{+}1$ at 116 -10
\endlabellist
\includegraphics[width=\textwidth]{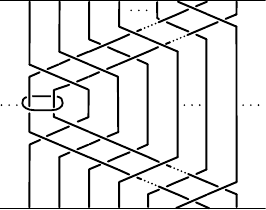}
\medskip
\caption{Apply Figure \ref{fig:useful}, and use isotopy and move N to remove the stabilizing unknot.}
\end{subfigure}

\caption{Obtaining III$_{i,j}$ using N and C. All strands are standardly colored.}
\label{fig:sheet 3}
\end{figure}

\subsubsection{Move IV}
The strategy of Fig 5(a) of \cite{pie95} can be used to produce move IV, as in Figure \ref{fig:sheet-iv}.

\begin{figure}
\centering

\begin{subfigure}[b]{0.3\textwidth}
\labellist\small
\pinlabel $0$ at 5 -10
\pinlabel $2d\text{-}10$ at 35 -10
\pinlabel $2d\text{-}4$ at 98 -10
\pinlabel $n\text{-}1$ at 136 -10
\tiny\pinlabel $\cdots$ at 16 -10
\pinlabel $\cdots$ at 119 -10
\endlabellist
\includegraphics[width=\textwidth]{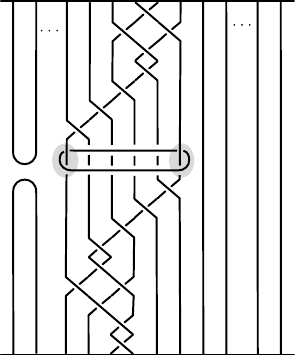}
\medskip
\caption{Starting from the trivial braid, use $d-3$ stabilizing unknots to open strands 0 through $2d\text{-}11$. Use an extra unknot colored $(3\ 2d\text{-}2)$, isotopy, and move N to attain the depicted state.}
\end{subfigure}
\hfill
\begin{subfigure}[b]{0.3\textwidth}
\labellist\small
\pinlabel $0$ at 5 -10
\pinlabel $2d\text{-}10$ at 35 -10
\pinlabel $2d\text{-}4$ at 98 -10
\pinlabel $n\text{-}1$ at 136 -10
\tiny\pinlabel $\cdots$ at 16 -10
\pinlabel $\cdots$ at 119 -10
\endlabellist
\includegraphics[width=\textwidth]{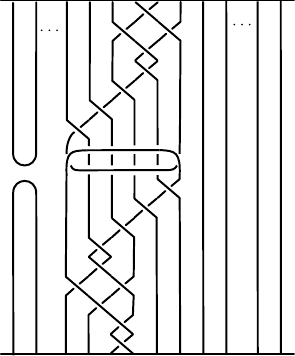}
\medskip
\caption{Move C in each shaded region of (a).}
\bigskip\bigskip\bigskip\medskip
\end{subfigure}
\hfill
\begin{subfigure}[b]{0.342\textwidth}
\labellist\small
\pinlabel $0$ at 5 -10
\pinlabel $2d\text{-}10$ at 35 -10
\pinlabel $2d\text{-}4$ at 98 -10
\pinlabel $n\text{-}1$ at 136 -10
\tiny\pinlabel $\cdots$ at 16 -10
\pinlabel $\cdots$ at 119 -10
\endlabellist
\includegraphics[width=\textwidth]{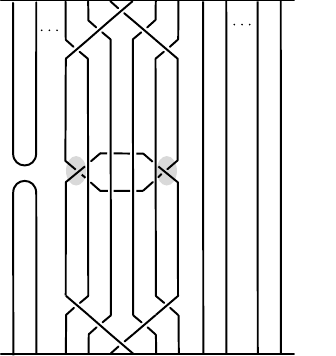}
\medskip
\caption{Isotopy from (b).}
\bigskip\bigskip\bigskip\bigskip\medskip
\end{subfigure}
\end{figure}

\begin{figure}\ContinuedFloat
\centering
\begin{subfigure}[b]{0.3\textwidth}
\labellist\small
\pinlabel $0$ at 5 -10
\pinlabel $2d\text{-}10$ at 35 -10
\pinlabel $2d\text{-}4$ at 98 -10
\pinlabel $n\text{-}1$ at 136 -10
\tiny\pinlabel $\cdots$ at 16 -10
\pinlabel $\cdots$ at 119 -10
\endlabellist
\includegraphics[width=\textwidth]{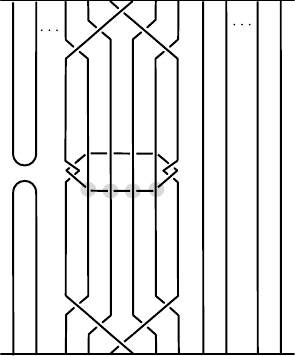}
\medskip
\caption{Move C in each shaded region of (c).}
\end{subfigure}
\hfill
\begin{subfigure}[b]{0.3\textwidth}
\labellist\small
\pinlabel $0$ at 5 -10
\pinlabel $2d\text{-}10$ at 35 -10
\pinlabel $2d\text{-}4$ at 98 -10
\pinlabel $n\text{-}1$ at 136 -10
\tiny\pinlabel $\cdots$ at 16 -10
\pinlabel $\cdots$ at 119 -10
\endlabellist
\includegraphics[width=\textwidth]{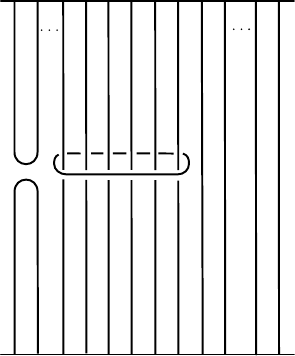}
\medskip
\caption{Move N in each shaded region of (d), and isotopy.}
\end{subfigure}
\hfill
\begin{subfigure}[b]{0.342\textwidth}
\labellist\small
\pinlabel $0$ at 5 -10
\pinlabel $2d\text{-}10$ at 35 -10
\pinlabel $2d\text{-}4$ at 98 -10
\pinlabel $n\text{-}1$ at 136 -10
\tiny\pinlabel $\cdots$ at 16 -10
\pinlabel $\cdots$ at 119 -10
\endlabellist
\includegraphics[width=\textwidth]{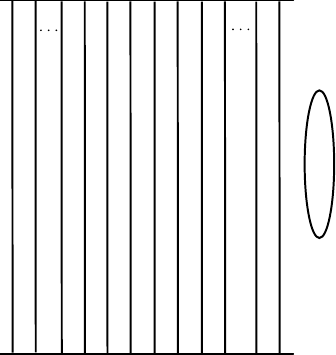}
\medskip
\caption{Move N and isotopy to remove the stabilizing unknot.}
\end{subfigure}

\caption{Obtaining IV using N and C. Strands are colored as in Figure \ref{fig:iv}.}
\label{fig:sheet-iv}
\end{figure}

\subsection{Proof of Corollary \ref{cor:lens}}\label{pf:lens}

Let $L$ be a degree $d$ colored link.

We first prove part (a). As the Heegaard genus of a simple branched cover of a link $L$ is given by (bridge number of $L$)$ - (d-1)$, degree $d$ simple covers of $d$-bridge links have Heegaard genus 1 and thus are lens spaces $L(p,q)$.

In order to determine $p$ and $q$, use C, N, and isotopy to write $L$ as as the stabilization to degree $d$ of a degree 3 colored link as in Lemma \ref{lem:stab}. The resulting link is a plat closure of a braid, which is a word in the generators $s_1, x_{2d-6}, x_{2d-4}, x_{2d-3}, \ldots, x_{n-2}$. Instances of $s_1$ can be successively removed, starting from the bottom, by performing the isotopy and moves C in Figure \ref{fig:lens-s}.

\begin{figure}
\centering
\begin{subfigure}[b]{0.3\textwidth}\centering
\labellist\footnotesize
\pinlabel $\cdots$ at 6 -10
\pinlabel $(23)$ at 27 -10
\pinlabel $(12)$ at 51 -10
\pinlabel $(12)$ at 74 -10
\endlabellist
\includegraphics[width=0.45\textwidth]{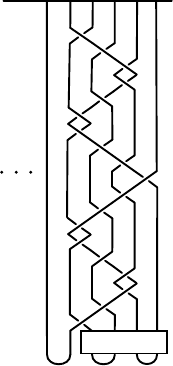}
\bigskip
\caption{The last instance of $s_1$.}
\end{subfigure}
\quad
\begin{subfigure}[b]{0.3\textwidth}\centering
\labellist\footnotesize
\pinlabel $\cdots$ at 6 -10
\pinlabel $(23)$ at 28 -10
\pinlabel $(12)$ at 68 -10
\endlabellist
\includegraphics[width=0.45\textwidth]{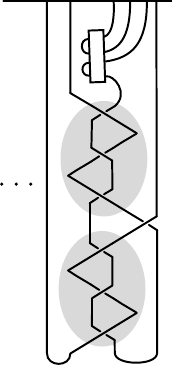}
\bigskip
\caption{Isotopy from (a).}
\end{subfigure}
\quad
\begin{subfigure}[b]{0.3\textwidth}\centering
\labellist\footnotesize
\pinlabel $\cdots$ at 6 -10
\pinlabel $(23)$ at 27 -10
\pinlabel $(12)$ at 72 -10
\endlabellist
\includegraphics[width=0.45\textwidth]{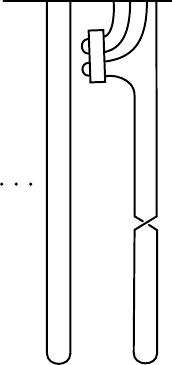}
\bigskip
\caption{Move C on each shaded region.}
\end{subfigure}

\caption{Removing $s_1$ in the $d$-bridge case using C and link isotopy.}
\label{fig:lens-s}
\end{figure}

This process can be repeated until we are left with a word in only $x_{2d-6}, x_{2d-4}, x_{2d-3}, x_{2d-2}$. This is a stabilization to degree $d$ of the double branched cover over a two-bridge link of some type $(p,q)$. Thus, the cover will be the lens space $L(p,q)$.

For part (b), we can proceed as in part (a) to eliminate each generator except $s_1$, which can be eliminated as in Figure \ref{fig:dbc-s}. We are left with the stabilization to degree $d$ of the double branched cover over a three-bridge link.

\begin{figure}
\centering
\begin{subfigure}[b]{0.15\textwidth}
\labellist\footnotesize
\pinlabel $(3\ 4)$ at 0 177
\endlabellist
\includegraphics[width=\textwidth]{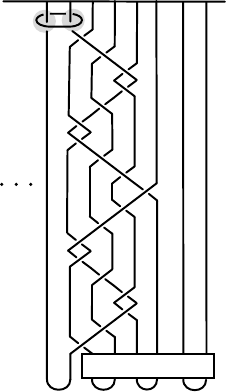}
\caption{The last instance of $s_1$, with colored unknot.}
\end{subfigure}
\qquad
\begin{subfigure}[b]{0.15\textwidth}
\labellist\footnotesize
\pinlabel $(2\ 4)$ at 4 160
\endlabellist
\includegraphics[width=\textwidth]{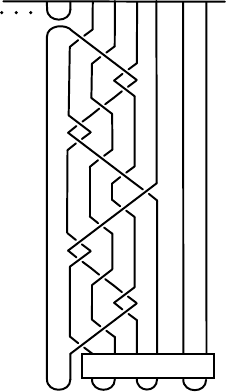}
\caption{C in shaded regions of (a).}
\bigskip\medskip\smallskip
\end{subfigure}
\qquad
\begin{subfigure}[b]{0.15\textwidth}
\labellist\footnotesize
\pinlabel $(2\ 4)$ at 120 50
\endlabellist
\includegraphics[width=\textwidth]{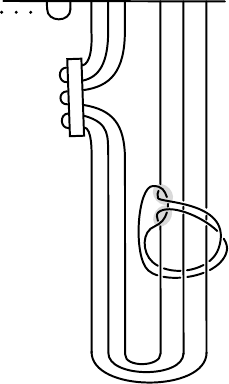}
\caption{Isotopy from (b).}
\bigskip\medskip\smallskip
\end{subfigure}
\qquad
\begin{subfigure}[b]{0.15\textwidth}
\labellist\footnotesize
\pinlabel $(2\ 4)$ at 120 56
\endlabellist
\includegraphics[width=\textwidth]{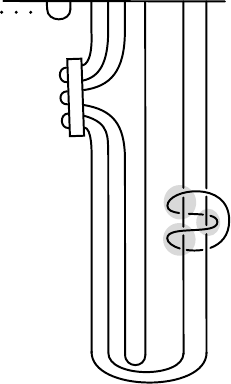}
\caption{C in shaded regions of (c).}
\bigskip\medskip\smallskip
\end{subfigure}
\qquad
\begin{subfigure}[b]{0.15\textwidth}
\labellist\footnotesize
\pinlabel $(2\ 4)$ at 0 90
\endlabellist
\includegraphics[width=\textwidth]{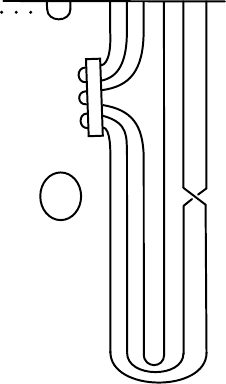}
\caption{C and isotopy from (d).}
\bigskip\medskip\smallskip
\end{subfigure}
\caption{Removing $s_1$ in the $d+1$-bridge case. The tops of the leftmost pair of strands are colored $(2\ 3)$. The tops of all other strands are colored $(1\ 2)$ unless otherwise noted. As no instances of $x_{2d-7}$ or $x_{2d-9}$ appear, the component colored $(3\ 4)$ is an unknot and can be moved to the position in (a) by isotopy and N.}
\label{fig:dbc-s}
\end{figure}

\bibliography{coveringmoves.bib}


\end{document}